\documentclass[11pt,reqno,section]{amsart}
\usepackage[english]{babel}
\usepackage[utf8]{inputenc}
\usepackage{amsmath}
\usepackage{amsthm}
\usepackage{verbatim}
\usepackage{enumerate}
\usepackage{amssymb}
\usepackage{amsfonts}
\usepackage{amscd,bezier}
\usepackage{anysize}
\usepackage{indentfirst}
\usepackage{upref}
\usepackage{color}
\usepackage[pagewise]{lineno}
\usepackage{tikz}
\usepackage{theoremref}
\usetikzlibrary{matrix}
\usepackage{pstricks}
\usepackage{subfig}
\usepackage{graphicx}
\usepackage{pstricks}
\usepackage{amscd}
\usepackage{relsize}
\usepackage[toc,page]{appendix}
\usepackage{commath}
\usepackage{mathabx}
\usepackage{MnSymbol}
\usepackage{xcolor}


\allowdisplaybreaks

\numberwithin{equation}{section}

\newtheorem{theorem}{Theorem}[section]
\newtheorem{lemma}[theorem]{Lemma}

\newtheorem{definition}[theorem]{Definition}
\newtheorem{proposition}[theorem]{Proposition}
\newtheorem{corollary}[theorem]{Corollary}

\newtheorem{example}[theorem]{Example}

\newtheorem{remark}[theorem]{Remark}

\marginsize{3cm}{3cm}{3cm}{3cm}

\newcommand{\walpha}{{\widetilde{\alpha}}}
\newcommand{\wbeta}{{\widetilde{\beta}}}

\newcommand{\muD}{{\mu\mathrm{D}}}
\newcommand{\cD}{{\ceg\mathrm{D}}}

\newcommand{\eD}{{\exp\mathrm{D}}}
\newcommand{\omegaD}{{\omega\mathrm{D}}}

\newcommand{\Si}{\Sigma}
\newcommand{\ED}{\mathrm{ED}}
\newcommand{\Id}{\mathrm{Id}}

\newcommand{\im}{\mathrm{im}}

\renewcommand{\P}{\mathrm{P}}

\newcommand{\sgn}{\mathrm{sgn}}

\newcommand{\qeg}{\mathrm{q}}
\newcommand{\ceg}{\mathrm{c}}

\newcommand{\x}{\mathrm{x}}
\newcommand{\A}{\mathrm{A}}

\numberwithin{equation}{section}

\allowdisplaybreaks
\usepackage{color}

\title[Growth rate comparisons]{The interplay of $\mu$-dichotomy, bounded growth, and spectral properties via growth rate comparisons.}

\author[N. Jara]{N\'estor Jara$^{*}$}

\thanks{$^{*}$
	This author was partially supported by ANID, Beca de Doctorado Nacional 21220105.}

\author[C. A. Gallegos]{Claudio A. Gallegos}
\address{Universidad de Chile, UCH, Facultad de Ciencias, Departamento de Matem\'aticas, Casilla 653, Santiago, Chile.}
\email{claudiogallegos@uchile.cl, nestor.jara@ug.uchile.cl} 


\date{}

\begin{document}
	
	\begin{abstract}
  We investigate the behavior of the dichotomy spectrum of nonautonomous linear systems under general growth rates. By introducing comparison criteria we clarify how $\mu$-dichotomy and $\mu$-bounded growth interact. We also study the evolution of the dichotomy spectrum under these comparisons, revealing that faster growth rates compress the spectrum, while slower growth rates expand it. Moreover, we introduce equivalence relations on the set of growth rates, which enable us to establish that, for any given system, there exists --up to equivalence-- at most one growth rate under which both properties, bounded growth and dichotomy, hold. Finally, we show that these equivalence relations lead to a classification of dichotomy spectra. Our results are valid in both discrete and continuous time settings.
	\end{abstract}
	
	\subjclass[2020]{Primary: 37D25.; Secondary: 34C41, 37C60.}

	
	\keywords{Nonautonomous difference equations, Nonautonomus hyperbolicity, Dichotomy spectrum}

	\maketitle	
	
	
\section{Introduction}

To provide context for the forthcoming discussion, consider the following nonautonomous linear system:
\begin{equation}\label{1}
 \dot{\x}=2|t|\x(t), \qquad t\in \mathbb{R}. 
\end{equation}

The exponential dichotomy spectrum of this system --also known as the Sacker $\&$ Sell spectrum \cite{Sacker}-- is equal to $\{+\infty\}$ (following the convention from \cite{Rasmussen,Rasmussen2}, see Remark \ref{910}). Namely, regardless of the shift applied to the system, it always exhibits an exponential dichotomy with projector $\P \equiv 0$.

The preceding remark places system \eqref{1}, along with others exhibiting similar behavior, outside the scope of many results that rely on conditions involving the dichotomy spectrum. For instance, results concerning normal forms \cite{CJ, Siegmund3} and smooth linearization \cite{Cuong} are based on the so-called nonresonance condition, which provides no information when the spectrum is equal to $\{+\infty\}$, $\{-\infty\}$ or $\{\pm\infty\}$. 

Recent research has focused on the concept of 
$\mu$-dichotomy, which offers a framework for analyzing dichotomies defined by growth rates. These growth rates are positive functions $\mu$ that emulate exponential behavior, thereby enabling a significantly broader setting. This notion of dichotomy naturally gives rise to a corresponding concept of dichotomy spectrum (see Def.~\ref{dichspectrum}), which encompasses the classical Sacker $\&$ Sell spectrum as a particular case.

Returning to system \eqref{1}, a closer examination reveals that, although it satisfies exponential dichotomy, it also exhibits a dichotomy governed by the growth rate defined by $t \mapsto \qeg(t) = e^{\sgn(t) \cdot t^2}$, for all $t\in\mathbb{R}$, which we refer to as the {\it quadratic exponential rate} (see Def.~\ref{DefNmuD}). Moreover, when computing the quadratic exponential dichotomy spectrum of system \eqref{1}, we observe that it corresponds to $\{1\}$, in contrast to the Sacker $\&$ Sell spectrum which, as mentioned above, is $\{+\infty\}$. This observation illustrates how adopting a different growth rate can provide a more refined description of a system's behavior.

In addition, it is straightforward to verify that system \eqref{1} does not exhibit exponential bounded growth (see, for instance, \cite[Sect.~3.1]{Siegmund2002} or \cite{Coppel}). However, it does exhibit quadratic exponential bounded growth (see Def.~\ref{NmuGrowth}), since its evolution operator (a precise definition is given in Sect.~\ref{dichotomies}) is defined as $\Psi(t,s) = \frac{\qeg(t)}{\qeg(s)}$ for all $t, s \in \mathbb{R}$.

Overall, the exponential rate fails to adequately describe the dynamics arising from \eqref{1}. However, by appropriately choosing the growth rate --in this case, the quadratic exponential-- the system's behavior becomes significantly more tractable.

It is worth noting that, although system \eqref{1} is defined in continuous time, we may consider its discrete analogue given by the difference equation $x(k+1) = A(k)x(k)$, where
\begin{equation*}
	A(k) = \left\{
	\begin{array}{lcc}
		e^{2n+1} & \text{if} & k \geqslant 0, \\
		e^{-2n-1} & \text{if} & k < 0.
	\end{array}
	\right.
\end{equation*}
 One can verify that its evolution operator in the discrete sense (see \eqref{evo}) is again given by $\Phi(k,n) = \frac{\qeg(k)}{\qeg(n)}$ for all $k, n \in \mathbb{Z}$. In this sense, the difference equation serves as a discrete counterpart to the system \eqref{1}. Therefore, the observations made in the continuous setting for \eqref{1} also hold in this discrete context.


\subsection{Novelty and Main Results}

In this article, we generalize the observations made in the initial example and establish a general criterion for determining when a system has a dichotomy spectrum of the form $\{+\infty\}$, $\{-\infty\}$, or $\{\pm\infty\}$. More specifically, we introduce several notions of comparison for growth rates and analyze, in each case, how these new definitions contribute to a deeper understanding of the behavior of the dichotomy spectrum. Furthermore, we explore several consequences derived from the new comparison framework introduced in this work, which we summarize as follows:

\begin{itemize}
\item In Sect.~\ref{section3}, we introduce the first comparison criterion for growth rates $\mu,\omega\colon\mathbb{Z}\to\mathbb{R}^{+}$, where we define what it means for $\mu$ to be faster than $\omega$, denoted by $\mu \gg \omega$ (see Def.~\ref{901} for the formal statement). This notion underlies the main findings of the section, which we highlight as follows:
\medskip
\begin{enumerate}
 \item[$\mathsection$] If a system has $\mu$-dichotomy, then it cannot exhibit $\omega$-bounded growth for any rate $\omega$ slower than $\mu$; see Thm.~\ref{805} and Cor.~\ref{721}.
    \medskip
\item[$\mathsection$] If a system exhibits $\omega$-bounded growth, then it cannot admit $\mu$-dichotomy for any rate $\mu$ faster than $\omega$; see Thm.~\ref{806} and Cor.~\ref{722}.    
\end{enumerate}

\medskip

\item In Sect.~\ref{section4}, we introduce a weaker comparison between growth rates $\omega$ and $\mu$, denoted by $\mu \succ \omega$, which we refer to as $\mu$ being weakly faster than $\omega$ (see Def.~\ref{902} for the formal statement). Roughly speaking, in this section we study how the dichotomy spectra associated with different growth rates evolve under weak comparison. This analysis leads to the main results, Theorems~\ref{808} and \ref{809}, which --combined with the results of Sect.~\ref{section3}-- provide a clear understanding of the spectral behavior: when growth rates become faster (even in the weak sense), the spectrum concentrates near zero; conversely, when the rates become slower (or weakly slower), the spectrum expands in both directions, approaching $-\infty$ and $+\infty$.

\medskip

\item Consider a discrete --or continuous-- time framework for a nonautonomous linear system. Let $\mathcal{G}$ be the set of all growth rates $\mu$ in this time setting. In Sect.\ref{section5}, we introduce equivalence relations on $\mathcal{G}$ (see Def.\ref{equivalences}) with the goal of clarifying the duality between $\mu$-dichotomy and $\mu$-bounded growth. Specifically, we establish that, up to a suitable notion of equivalence, a given system admits at most one growth rate $\mu$ under which it simultaneously exhibits both $\mu$-bounded growth and $\mu$-dichotomy; see Thm.~\ref{811}. Moreover, for each equivalence relation defined on $\mathcal{G}$, we characterize the dichotomy spectrum in every corresponding equivalence class; see Thm.~\ref{908}. 

\end{itemize}

We emphasize that, although most of our definitions and results are developed in the discrete-time setting, the underlying concepts extend naturally to continuous-time systems. Accordingly, our contributions are valid in both settings.

\section{Preliminaries}
In this section we present the main preliminary definitions we need to describe our results.

\subsection{Dichotomies with growth rates}\label{dichotomies}
Consider the nonautonomous difference equation
\begin{equation}\label{700}
x(k+1)=A(k)x(k),\qquad k\in \mathbb{Z},
\end{equation}
and suppose that $A:\mathbb{Z}\to \mathcal{M}_d(\mathbb{R})$ is nonsingular. The \textbf{evolution operator} for \eqref{700} is the map $\Phi: \mathbb{Z} \times \mathbb{Z} \to GL_d(\mathbb{R})$, defined by
\begin{equation}\label{evo}
    \Phi(k,n)= \left\{ \begin{array}{lc}
             A(k-1)A(k-2)\cdots A(n) &  \text{if}\,\,k > n, \\
              \Id & \text{ if}\,\,k=n, \\
              A^{-1}(k)A^{-1}(k+1)\cdots A^{-1}(n-1) & \text{if}\,\,k<n.
             \end{array}
   \right.
   \end{equation}
Note that the evolution operator has the property that the map $k\mapsto \Phi(k,n)$ is a solution to the matrix-valued Cauchy problem:
\begin{equation*}
    \left\{ \begin{array}{lcc}
             X(k+1) & = &A(k)X(k), \\
             \\ X(n) &=&\Id.
             \end{array}
             \right.
\end{equation*}

For dynamics in continuous time, that is, arising from a nonautonomous differential equation
\begin{equation}\label{3}
    \dot{\x}=\A(t)\x(t),\qquad t\in\mathbb{R},
\end{equation}
where $\A:\mathbb{R}\to \mathcal{M}_d$ is locally integrable, an analogous definition is given for its evolution operator $\Psi:\mathbb{R}\times\mathbb{R}\to GL_d(\mathbb{R})$, as a solution to the corresponding matrix-valued Cauchy problem. Although most of the manuscript is written in terms of discrete-time dynamics, the concepts presented are equally applicable to the continuous-time framework; thus, it is convenient to keep such systems in consideration.

An \textbf{invariant projector} for \eqref{700} is a map $\P:\mathbb{Z}\to \mathcal{M}_d(\mathbb{R})$ of idempotents such that 
\begin{equation*}
    A(k) \P(k)=\P(k+1)A(k),\quad\,\forall\,k\in \mathbb{Z}.
\end{equation*}
Note that this condition implies that each $\P(n)$ has the same rank, for $n\in\mathbb{Z}$. As a consequence, if $n\mapsto\P(n)$ is an invariant projector for \eqref{700}, then
\begin{equation}\label{2}
  \P(k)\Phi(k,n)=\Phi(k,n)\P(n),\qquad\forall\,n,k\in \mathbb{Z}.
\end{equation}
Through the expression \eqref{2}, but replacing $\Phi$ with $\Psi$, an analogous definition is given for the invariant projector associated to the differential system \eqref{3}.

The following definition is a crucial tool for describing behaviors more general than exponential ones. This concept is based on a similar notion presented in \cite{Silva discreto} for dynamics defined on $\mathbb{Z}^+$, and is also based on a similar concept presented in \cite{Silva} for continuous time flows. This notion has gained importance recently, as it has been employed on different works \cite{CJ,GJ}. Nonetheless, this concept has an already long trajectory in literature. To the best of our knowledge, a preliminary notion of it first appeared in the works of Pinto and Naul\'in \cite{PN2,PN1,PN3} and has been applied to describe different scenarios \cite{Bento, ZFY}.

\begin{definition}\label{growthrate}
    We say that a map $\mu:\mathbb{Z},\mathbb{R}\to \mathbb{R}^+$ is a \textbf{growth rate} if it is non-decreasing and verifies $\mu(0)=1$,  $\lim_{n\to +\infty}\mu(n)=+\infty$ and $\lim_{n\to -\infty}\mu(n)=0$. 
\end{definition}

If the domain of $\mu$ is $\mathbb{Z}$ we refer to it as a discrete growth rate, while if its domain is $\mathbb{R}$ we refer to it as a continuous growth rate. In the case of a continuous growth rate, we will be interested mainly in differentiable growth rates.

In order to give some examples, we employ the {\bf sign map} $\sgn:\mathbb{Z},\mathbb{R}\to \mathbb{R}$ given by $\sgn(n)=1$ if $n> 0$, $\sgn(0)=0$ and $\sgn(n)=-1$ if $n<0$.

For instance, the map $n\mapsto \exp(n)= e^n$ defines the \textbf{exponential growth rate}, while $p:\mathbb{Z}\to\mathbb{R}^+$ given by $p(n)=n^{\sgn(n)}$ for $n\neq 0$ and $p(0)=1$ defines the \textbf{polynomial growth rate}. This last notion of growth has been employed on \cite{Dragicevic6} for dynamics defined on $\mathbb{Z}^+$ and a similar notion was used on \cite[Ex.~3.3]{CJ} in the context of continuous time dynamics, although in that framework the definition is usually presented as $p(t)=(1+|t|)^{\sgn(t)}$, for all $t\in\mathbb{R}$.

Other important examples that we will employ throughout the text are: the map $\qeg:\mathbb{Z}\to \mathbb{R}^+$ given by $\qeg(n)=e^{\sgn(n)n^2}$, which we call the \textbf{quadratic exponential growth rate}, and the map $\ceg:\mathbb{Z}\to \mathbb{R}^+$, given by $\ceg(n)=e^{n^3}$, which we call the \textbf{cubic exponential growth rate}. The cubic exponential growth rate was employed on \cite[Ex.~3.6]{CJ} in the context of continuous time dynamics.

 Note that if $\hat{\mu}:[0,+\infty)\to \mathbb{R}^+$ is an increasing function with $\hat{\mu}(0)=1$ and $\displaystyle\lim_{t\to\infty}\hat{\mu}(t)=+\infty$, then defining $\mu:\mathbb{R}\to \mathbb{R}^+$ by
    \begin{equation*}
		\mu(t):= \left\{ \begin{array}{lcc}
		\hat{\mu}(t) &  \text{ if } &   t\geqslant 0, \\
		 \hat{\mu}(-t)^{-1} & \text{ if }& t< 0,
		\end{array}
		\right.
		\end{equation*}
        it turns out to be a growth rate.
 
For similar purposes, in \cite[p. 345]{Potzsche} the author introduces the concept of {\it generalized exponential function}, defined as a map $e_a(k, n)$ depending on two discrete variables. While this tool is more general, its primary application arises when $e_a(k, n) = \mu(k) / \mu(n)$ for some growth rate $\mu$.

\begin{definition}\label{DefNmuD}
    Consider a discrete growth rate $\mu$. We say that system \eqref{700} admits a \textbf{$\mu$-dichotomy} $(\muD)$ on $\mathbb{Z}$ if there is an invariant projector $n\mapsto \P(n)$ and constants $K\geqslant 1$, $\alpha<0$, $\beta>0$ such that 
      \[\left\{ \begin{array}{lc}
            \norm{\Phi(k,n)\P(n)}\leqslant K\left(\dfrac{\mu(k)}{\mu(n)}\right)^\alpha, &\forall\,\, k \geqslant n, \\
             \norm{\Phi(k,n)[\Id-\P(n)]}\leqslant K\left(\dfrac{\mu(k)}{\mu(n)}\right)^\beta , &\forall\,\, k\leqslant n.
             \end{array}
   \right.\]
\end{definition}

The above concept is similarly defined for continuous-time dynamics, such as those arising from equation \eqref{3}, by considering a continuous growth rate and replacing $\Phi$ with $\Psi$.  For more details, we refer the reader to \cite[Def. 2.3]{CJ}, \cite[p. 621]{Silva} and \cite[Def. 2.2]{GJ}.

The above estimates are referred to as a dichotomy with \textbf{parameters} $(\P; \alpha, \beta)$ and constant $K$. Note that if $\P = \Id$, then there is no parameter $\beta$, while if $\P = 0$, there is no parameter $\alpha$. In both cases, we retain the notation but replace the missing parameter with an asterisk ($*$).

The notation used in this paper ensures a consistent unification of terminology for both difference and differential equations, aligning with the notation of recent works that investigate the spectral problem \cite{GJ,Silva}. The concept of $\mu$-dichotomy encompasses also other previously studied notions of dichotomy, as the exponential or polynomial, defined respectively by those growth rates.

Notably, the concept of $\mu$-dichotomy has also, and mainly, emerged in the context of nonuniform dichotomies \cite{Silva discreto,Silva}, that is, dichotomies in which the convergence depends not only on the relation of the times considered on the evolution operator, but also on the initial condition itself. For the purposes of this paper, we leave out these kind of dynamics.

\subsection{Dichotomy spectrum and bounded growth}

An important auxiliary concept we require is that of a weighted system, which serves as a tool for modifying systems in order to explore their spectra.

\begin{definition}
Let $\mu$ be a discrete growth rate. Given a real number $\gamma$, we define the \textbf{{\rm ($\mu,\gamma$)}-weighted} system (or simply $\gamma$-weighted system, if the growth rate is clear) associated to \eqref{700} as
\begin{equation}\label{701}
    x(k+1)=A(k)\left(\frac{\mu(k+1)}{\mu(k)}\right)^{-\gamma}x(k),\qquad k\in \mathbb{Z}.
\end{equation}
\end{definition}

Note that the evolution operator for the $(\mu,\gamma)$-weighted system is given by
\begin{equation}\label{761}
    \Phi_{\mu,\gamma}(k,n)=\left(\frac{\mu(k)}{\mu(n)}\right)^{-\gamma}\Phi(k,n),\qquad k,n\in \mathbb{Z},
\end{equation}
where $\Phi(k,n)$ is the evolution operator of the system \eqref{700}. If the growth rate $\mu$ is clear, we just denote it by $\Phi_\gamma$.

\begin{remark}
   {\rm  In the case of continuous time dynamics, we instead consider the $(\mu,\gamma)$-\textbf{shifted} system from \eqref{3}, which is given by. 
\begin{equation}\label{shift}
    \dot{\x}=\left[\A(t)-\gamma\frac{\dot{\mu}(t)}{\mu(t)}\Id\right]\x(t),
\end{equation}
where in this case $\mu:\mathbb{R}\to \mathbb{R}^+$ is a differentiable growth rate. The remarkable property of this shifted differential equation is that its evolution operator is given in the same fashion as \eqref{761}, but replacing $\Phi$ with $\Psi$ and considering continuous-time variables.}
\end{remark}

\begin{remark}\label{910}
{\rm
Following the convention introduced by M. Rasmussen \cite{Rasmussen,Rasmussen2} we will state that \eqref{701} has $\mu$-dichotomy for $\gamma=+\infty$ if there exist $\widehat{\gamma}\in\mathbb{R}$ and $\alpha<0$ such that the $\widehat{\gamma}$-weighted system admits $\muD$ with parameters $(\Id;\alpha,*)$. Analogously, we state that \eqref{701} has $\mu$-dichotomy for $\gamma=-\infty$ if there exist $\widehat{\gamma}\in\mathbb{R}$ and $\beta>0$ such that the $\widehat{\gamma}$-weighted system admits $\muD$ with parameters $(0;*,\beta)$. 
}
\end{remark}

Employing these concepts we can define the dichotomy spectrum.

\begin{definition}\label{dichspectrum}
    The \textbf{$\mu$-dichotomy spectrum} of \eqref{700} is the set
    \[\Sigma_\muD(A):=\{\gamma\in \mathbb{R}\cup\{\pm\infty\}: \eqref{701}\text{ does not admit }\muD\}.\]
   Moreover, its complement is the set $\rho_\muD(A)=\left(\mathbb{R}\cup\{\pm\infty\}\right)\setminus \Sigma_\muD(A)$ called the \textbf{$\mu$-resolvent set}.
\end{definition}

Note that we allow the $\pm\infty$-weighted equations to not have dichotomy. By the previously described convention, we have that $+\infty\in \Sigma_\muD(A)$ if for every $\gamma\in \mathbb{R}$ the $\gamma$-weighted system \eqref{701} does not have dichotomy with projector $\Id$. 

Similarly,  $-\infty\in \Sigma_\muD(A)$ if for every $\gamma\in \mathbb{R}$ the $\gamma$-weighted system \eqref{701} does not have dichotomy with projector $0$. 

Once again, this concept is also employed for differential equations, just by considering shifted systems from \eqref{3}, instead of weighted systems from \eqref{700}.

In order to establish a criteria for when these spectra are bounded sets, the concept of bounded growth (see \cite[Sect. 3.1]{Siegmund2002} or \cite{Coppel}) has been crucial in literature.

\begin{definition}\label{NmuGrowth}
		The system \eqref{700} has \textbf{$\mu$-bounded growth}, or just $\mu$-\textbf{growth}, if there are constants $\widehat{K}\geqslant 1$, $a\geqslant 0$ such that
		\[
		\|\Phi(k,n)\|\leqslant \widehat{K}\left(\frac{\mu(k)}{\mu(n)}\right)^{\sgn(k-n)a},\quad\forall\,k,n\in \mathbb{Z}.
		\]
	\end{definition}

Building upon this concept, together with well-established techniques from nonautonomous spectral theory, various authors have established distinct formulations of the so-called spectral theorem, that is, results characterizing the potential structure of the dichotomy spectrum. For a comprehensive account, we refer the reader to \cite{Sacker, Siegmund2002, Silva} and the references therein. In particular, we refer to \cite[Theorem 5.12]{Kloeden} and \cite[Theorem 4.24]{Rasmussen} regarding the convention involving $\pm\infty$, whose notation we adopt throughout this work:
$$[-\infty,a]:=(-\infty,a]\cup\{-\infty\},\qquad[a,+\infty]:=[a,+\infty)\cup\{+\infty\},$$
for an arbitrary chosen $a\in\mathbb{R}$, and
$$[-\infty,-\infty]:=\{-\infty\},\qquad[+\infty,+\infty]:=\{+\infty\},\qquad[-\infty,+\infty]=\mathbb{R}\cup \{\pm \infty\}.$$

\begin{theorem}
    (Spectral Theorem) Fix a growth rate $\mu$. Consider the nonautonomous difference equation \eqref{700}. There exists some $m\in \{1,\dots,d\}$ such that
		\[
		\Si_\muD(A)=\bigcup_{i=1}^m[a_i,b_i],
		\]
		for some $a_i,b_i\in \mathbb{R}\cup \{\pm \infty\}$ verifying $a_i\leqslant b_i<a_{i+1}$. The sets $[a_i,b_i]$ are referred to as spectral intervals, while the connected components of the resolvent set are called spectral gaps.

The following are also verified:
\begin{itemize}
    \item [(i)] For each $\gamma\in \rho_{\muD}(A)$, the $\gamma$-weighted system has $\muD$ with a unique invariant projector $n\mapsto \P_\gamma(n)$. 
    \item [(ii)] For $\zeta,\gamma\in \rho_{\muD}(A)$ with $\zeta>\gamma$, the projectors of their respective weighted systems are the same if and only if they lie in the same spectral gap.  In other words, $\P_\zeta(n)=\P_\gamma(n)$ for every $n\in \mathbb{Z}$ if and only if $[\gamma,\zeta]\subset \rho_{\muD}(A)$.
    \item [(iii)] If $\zeta,\gamma\in \rho_\muD(A)$ lie on different spectral gaps and $\zeta>\gamma$, then $\ker \P_\zeta(n)\subsetneq \ker\P_\gamma(n)$ and $\im\,\P_\gamma(n)\subsetneq \im\,\P_\zeta(n)$ for every $n\in \mathbb{Z}$.
    \item [(iv)] If the system \eqref{700} has $\mu$-growth, the spectrum is a bounded set. 
\end{itemize}
\end{theorem}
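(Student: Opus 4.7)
The plan is to adapt the classical Sacker--Sell spectral theorem to the $\muD$-framework, exploiting the identity \eqref{761} which reduces the analysis of each $(\mu,\gamma)$-weighted system to a multiplicative modification of the original evolution operator $\Phi$. For openness of $\rho_\muD(A)$ and local constancy of the projector, fix $\gamma_0\in\rho_\muD(A)$ with dichotomy parameters $(\P;\alpha,\beta)$. By \eqref{761}, the estimates $\|\Phi_\gamma(k,n)\P(n)\|\leq K(\mu(k)/\mu(n))^{\alpha+\gamma_0-\gamma}$ for $k\geq n$ and $\|\Phi_\gamma(k,n)[\Id-\P(n)]\|\leq K(\mu(k)/\mu(n))^{\beta+\gamma_0-\gamma}$ for $k\leq n$ yield $\muD$ for every $\gamma\in(\gamma_0+\alpha,\gamma_0+\beta)$ with the same projector $\P$. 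This establishes openness of $\rho_\muD(A)$ and the fact that the projector is constant on each connected component (gap) of the resolvent.

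For the uniqueness part of (i), if $\P_1,\P_2$ both witness $\muD$ for a fixed $\gamma$ and $x\in\im\,\P_1(n)$, I would decompose $x=\P_2(n)x+[\Id-\P_2(n)]x$. The first summand is bounded forward by $\P_2$'s stable estimate, while the second, if nonzero, satisfies $\|\Phi_\gamma(k,n)[\Id-\P_2(n)]x\|\geq K^{-1}(\mu(k)/\mu(n))^\beta\|[\Id-\P_2(n)]x\|$ for $k\geq n$, obtained by inverting $\P_2$'s unstable estimate. Since $\mu(k)\to+\infty$, this contradicts the decay of $\Phi_\gamma(k,n)x$ guaranteed by $\P_1$. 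Hence $\im\,\P_1=\im\,\P_2$, and by a symmetric argument on kernels, $\P_1=\P_2$.

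For (iii), the same kind of computation via \eqref{761} shows that when $\gamma_1<\gamma_2$ are both in $\rho_\muD(A)$ one has $\im\,\P_{\gamma_1}(n)\subseteq\im\,\P_{\gamma_2}(n)$ and $\ker\,\P_{\gamma_2}(n)\subseteq\ker\,\P_{\gamma_1}(n)$. If either inclusion is an equality at some $n$, invariance of the projectors propagates it to every $n$, and hence $\P_{\gamma_1}=\P_{\gamma_2}=:\P$; interpolating the $\gamma_1$-decay on $\im\,\P$ with the $\gamma_2$-growth control on $\ker\,\P$ then yields $\muD$ for every $\gamma\in[\gamma_1,\gamma_2]$ with the common projector, placing $\gamma_1$ and $\gamma_2$ in the same gap. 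The contrapositive gives (iii). Since $\mathrm{rank}\,\P_\gamma(n)\in\{0,1,\ldots,d\}$ strictly increases across successive gaps (and the extreme gaps at $\pm\infty$, if present, have ranks $d$ and $0$ respectively by the convention of Remark~\ref{910}), there are at most $d+1$ gaps and at most $d$ spectral intervals; claim (ii) follows by combining this with local constancy. Finally, (iv) is immediate: $\mu$-growth gives $\|\Phi_\gamma(k,n)\|\leq\widehat{K}(\mu(k)/\mu(n))^{a-\gamma}$ for $k\geq n$ and any $\gamma>a$, so $\muD$ holds with projector $\Id$; symmetrically $\gamma<-a$ yields projector $0$, whence $\Si_\muD(A)\subseteq[-a,a]$.

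The main technical subtlety lies in the uniqueness argument and in propagating a pointwise coincidence of image or kernel to the whole time line: unlike the exponential case, where the stable/unstable subspaces admit the qualitative description $\{x:\Phi(k,n)x\to 0\}$, in the $\mu$-setting one must play the quantitative $(\mu(k)/\mu(n))^\alpha$-decay on the image against the reciprocal lower bound $(\mu(k)/\mu(n))^\beta$ on the kernel, whose validity rests on the monotonicity and divergence of $\mu$ assumed in Definition~\ref{growthrate} together with the invertibility of $A$ in \eqref{700}.
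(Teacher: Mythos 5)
The paper does not actually prove this theorem: it is presented in the preliminaries as a known result and attributed to the spectral-theory literature (\cite{Sacker,Siegmund2002,Silva}, with \cite{Kloeden,Rasmussen} for the $\pm\infty$ convention), so there is no in-paper proof to compare against. Your reconstruction is the standard Sacker--Sell argument adapted to the $\mu$-framework, and it is sound. The exponent-shift identity \eqref{761} correctly yields openness of $\rho_\muD(A)$ and local constancy of the projector on $(\gamma_0+\alpha,\gamma_0+\beta)$; uniqueness follows, as you say, by pitting the forward decay $\|\Phi_\gamma(k,n)\P(n)\|\leqslant K(\mu(k)/\mu(n))^\alpha$ against the lower bound $\|\Phi_\gamma(k,n)v\|\geqslant K^{-1}(\mu(k)/\mu(n))^\beta\|v\|$ for $v\in\ker\P(n)$, which is where the monotonicity, divergence of $\mu$, and invertibility of $A$ are genuinely used; the monotone filtration $\im\,\P_{\gamma_1}\subseteq\im\,\P_{\gamma_2}$, $\ker\,\P_{\gamma_2}\subseteq\ker\,\P_{\gamma_1}$ plus the interpolation between the $\gamma_1$-stable and $\gamma_2$-unstable estimates gives (ii)--(iii); rank counting bounds the number of gaps by $d+1$ and hence the number of spectral intervals by $d$; and weighting by $|\gamma|>a$ turns $\mu$-growth into $\muD$ with projector $\Id$ (resp.\ $0$), giving (iv). One small thing you leave implicit is that $\Sigma_\muD(A)\neq\emptyset$, which is needed for $m\geqslant1$: this follows from your own filtration argument, since if both $\pm\infty$ lie in $\rho_\muD(A)$ the corresponding projectors are $\Id$ and $0$, which differ when $d\geqslant1$, so at least one spectral interval must separate the two extreme gaps.
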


\section{Growth rates comparison}\label{section3}

This section is devoted to introducing a novel notion of comparison between growth rates. We provide examples to clarify the concept and examine its implications for the spectral behavior of a nonautonomous linear system. In particular, the proposed notion enables us to establish a duality between $\mu$-dichotomy and $\mu$-bounded growth.

\begin{definition}\label{901}
    Consider two growth rates $\mu,\omega:\mathbb{Z}\to \mathbb{R}^+$. We say $\mu$ is \textbf{faster} than $\omega$ ($\mu\gg\omega$) if for every $\alpha,\widetilde{\alpha}<0$ there exists $M\geqslant1$ such that
    \begin{equation}\label{800}
        \left(\frac{\mu(k)}{\mu(n)}\right)^\alpha\leqslant M\left(\frac{\omega(k)}{\omega(n)}\right)^{\widetilde{\alpha}},\quad \forall\,k\geqslant n.
    \end{equation}

    Equivalently, in this case we say $\omega$ is \textbf{slower} than $\mu$ ($\omega\ll\mu$).
\end{definition}

First, let us show a simple lemma to state an equivalent definition to this concept.

\begin{lemma}\label{4}
 Consider two growth rates $\mu,\omega:\mathbb{Z}\to \mathbb{R}^+$. We have that $\mu$ is faster than $\omega$ if and only if for every 
 $\beta,\widetilde{\beta}>0$ there exist $M\geqslant1$ such that
 \begin{equation}\label{801}
     \left(\frac{\mu(k)}{\mu(n)}\right)^\beta\leqslant M\left(\frac{\omega(k)}{\omega(n)}\right)^{\widetilde{\beta}},\quad \forall\,n\geqslant k.
 \end{equation}
\end{lemma}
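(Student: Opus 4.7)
The plan is to show that the two inequalities are equivalent by a simple algebraic manipulation: negating exponents and swapping the roles of $k$ and $n$.

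Concretely, I would start from the defining inequality of $\mu \gg \omega$. Given $\beta, \widetilde{\beta} > 0$, set $\alpha := -\beta < 0$ and $\widetilde{\alpha} := -\widetilde{\beta} < 0$. By Definition~\ref{901} there exists $M \geqslant 1$ such that
\[
\left(\frac{\mu(k)}{\mu(n)}\right)^{-\beta} \leqslant M \left(\frac{\omega(k)}{\omega(n)}\right)^{-\widetilde{\beta}}, \qquad \forall\, k \geqslant n.
\]
Inverting both sides (the inequality flips because everything is positive) yields
\[
\left(\frac{\mu(n)}{\mu(k)}\right)^{\beta} \geqslant M^{-1} \left(\frac{\omega(n)}{\omega(k)}\right)^{\widetilde{\beta}}, \qquad \forall\, k \geqslant n,
\]
which is not quite what we want, so instead I would rewrite the first display as
\[
\left(\frac{\mu(n)}{\mu(k)}\right)^{\beta} \leqslant M \left(\frac{\omega(n)}{\omega(k)}\right)^{\widetilde{\beta}}, \qquad \forall\, k \geqslant n,
\]
and then relabel the dummy variables by swapping $k \leftrightarrow n$. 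This gives inequality \eqref{801} with the same constant $M$.

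The reverse implication follows by exactly the same argument: starting from \eqref{801} with arbitrary $\beta, \widetilde{\beta} > 0$, given $\alpha, \widetilde{\alpha} < 0$ I set $\beta := -\alpha$, $\widetilde{\beta} := -\widetilde{\alpha}$, flip the ratios, and relabel $k \leftrightarrow n$ to recover \eqref{800}.

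There is no real obstacle here, as the statement is essentially tautological under the substitution $\alpha \mapsto -\beta$, $\widetilde{\alpha} \mapsto -\widetilde{\beta}$ together with the symmetry $(k,n) \mapsto (n,k)$; the only care needed is to track that the constant $M$ transfers unchanged and that the quantifiers on $\alpha,\widetilde{\alpha} < 0$ and $\beta, \widetilde{\beta} > 0$ are in bijection via this sign change.
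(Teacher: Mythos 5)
Your proof is correct and takes essentially the same approach as the paper: observe that $\left(\frac{\mu(k)}{\mu(n)}\right)^{-\beta}=\left(\frac{\mu(n)}{\mu(k)}\right)^{\beta}$ and use the sign change $\alpha\mapsto-\beta$, $\widetilde{\alpha}\mapsto-\widetilde{\beta}$ together with the swap $k\leftrightarrow n$. (The brief ``inverting both sides'' detour you discard is not needed, and in fact is not correctly inverted there, but since you abandon it the final argument is fine.)
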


\begin{proof}
It is enough to see that $\left(\frac{\mu(k)}{\mu(n)}\right)^\beta=\left(\frac{\mu(n)}{\mu(k)}\right)^{-\beta}$, thus \eqref{800} is equivalent to \eqref{801} by setting $\beta=-\alpha$ and $\widetilde{\beta}=-\widetilde{\alpha}$.
\end{proof}

We now proceed to illustrate this new concept through a series of examples.

\begin{example}\label{quadratic}
    {\rm
    
Consider the functions $\exp(n) = e^n$ and $\qeg(n) = e^{\sgn(n) n^2}$, for all $n \in \mathbb{Z}$, denoting the exponential and quadratic exponential growth rates, respectively. Fix $\alpha,\widetilde{\alpha}<0$ and consider $k\geqslant n\geqslant 0$. We have
\begin{eqnarray*}
    \left(\frac{\qeg(k)}{\qeg(n)}\right)^\alpha\leqslant M\left(\frac{\exp(k)}{\exp(n)}\right)^{\walpha}&\Longleftrightarrow& e^{\alpha(k^2-n^2)}\leqslant Me^{\walpha(k-n)}\\
    &\Longleftrightarrow& e^{\alpha(k^2-n^2)-\walpha(k-n)}\leqslant M\\
        &\Longleftrightarrow& \alpha(k^2-n^2)-\walpha(k-n)\leqslant \log\left(M\right)\\
        &\Longleftrightarrow& (k-n)\left(\alpha(k+n)-\walpha\right)\leqslant \log\left(M\right).
\end{eqnarray*}

Now, if $k+n\geqslant\frac{\walpha}{\alpha}$, then $\alpha(k+n)-\walpha\leqslant 0$, thus
\[
(k-n)\left(\alpha(k+n)-\walpha\right)\leqslant 0=\log\left(1\right).
\]

On the order hand, note that the set $S:=\{(k,n)\in \mathbb{Z}^2: 0\leqslant n\leqslant k\land k+n< \frac{\walpha}{\alpha}\}$ is either finite or empty. If $S=\emptyset$, there is nothing to prove. On the other hand, if $S\neq \emptyset$, we can define $\widetilde{M}_{\alpha,\walpha}>0$ by
\[
\log\left(\widetilde{M}_{\alpha,\walpha}\right):=\max_{(k,n)\in S}(k-n)\left(\alpha(k+n)-\walpha\right),
\]
and $M_{\alpha,\walpha}=\max\{1,\widetilde{M}_{\alpha,\walpha}\}$, which leads us to
\[
\left(\frac{\qeg(k)}{\qeg(n)}\right)^\alpha\leqslant M_{\alpha,\walpha}\left(\frac{\exp(k)}{\exp(n)}\right)^{\walpha},\qquad\forall\,0\leqslant n\leqslant k.
\]

Now, if $n\leqslant k \leqslant 0$, then $0\leqslant -k\leqslant -n$, thus the previous case implies
\begin{align*}
    e^{\alpha((-n)^2-(-k)^2)}\leqslant M_{\alpha,\walpha}\,e^{\walpha((-n)-(-k))}&\Longrightarrow \, e^{\alpha(n^2-k^2)}\leqslant M_{\alpha,\walpha}\,e^{-\walpha(n-k)}\\
    &\Longrightarrow \, e^{\alpha(\sgn(k)k^2-\sgn(n)n^2)}\leqslant M_{\alpha,\walpha}\,e^{\walpha(k-n)}\\
    &\Longrightarrow \, \left(\frac{\qeg(k)}{\qeg(n)}\right)^\alpha\leqslant M_{\alpha,\walpha}\left(\frac{\exp(k)}{\exp(n)}\right)^{\walpha}.
\end{align*}

Finally, for $n\leqslant 0\leqslant k$, we have proved
\[
\left(\frac{\qeg(k)}{\qeg(0)}\right)^\alpha\leqslant M_{\alpha,\walpha}\left(\frac{\exp(k)}{\exp(0)}\right)^{\walpha}\quad\text{and}\quad\left(\frac{\qeg(0)}{\qeg(n)}\right)^\alpha\leqslant M_{\alpha,\walpha}\left(\frac{\exp(0)}{\exp(n)}\right)^{\walpha},
\]
therefore, multiplying these expressions we obtain
\[
\left(\frac{\qeg(k)}{\qeg(n)}\right)^\alpha\leqslant M_{\alpha,\walpha}^2\left(\frac{\exp(k)}{\exp(n)}\right)^{\walpha},
\]
hence, we conclude
\[
\left(\frac{\qeg(k)}{\qeg(n)}\right)^\alpha\leqslant M_{\alpha,\walpha}^2\left(\frac{\exp(k)}{\exp(n)}\right)^{\walpha},\qquad\forall\, n\leqslant k.
\]
    }
\end{example}





\begin{example}\label{cubic}
    {\rm 
    Consider the quadratic and cubic growth rates defined respectively by $\qeg(n)=e^{\sgn(n)n^2}$ and $\ceg(n)=e^{n^3}$, for all $n\in\mathbb{Z}$. We claim that $\ceg$ is faster than $\qeg$. Indeed, let $\alpha,\widetilde{\alpha}<0$ be fixed and assume that $k\geqslant n\geqslant0$. Observe that the following equivalence holds: 
    \begin{eqnarray*}
    \left(\frac{\ceg(k)}{\ceg(n)}\right)^\alpha\leqslant M\left(\frac{\qeg(k)}{\qeg(n)}\right)^{\walpha}&\Longleftrightarrow& 
         \alpha(k^3-n^3)-\walpha(k^2-n^2)\leqslant \log\left(M\right)\\
        &\Longleftrightarrow& (k-n)\left(\alpha(k^2+kn+k^2)-\walpha(k+n)\right)\leqslant \log\left(M\right).
\end{eqnarray*}
Therefore, to ensure that $\qeg \ll \ceg$, it is necessary to define an appropriate constant $M \geqslant 1$ that satisfies the above inequality for all $k \geqslant n \geqslant 0$.

Note that $k + n = 0$ if and only if $k = n = 0$, in which case the inequality holds trivially by choosing $M = 1$. Now, if $(k+n)-\dfrac{kn}{k+n}\geqslant\dfrac{\walpha}{\alpha}$, then it follows that
\[
(k-n)\left(\alpha(k^2+kn+k^2)-\walpha(k+n)\right)\leqslant0.
\]
Therefore, once again, it suffices to take $M = 1$.

On the other hand, similarly as in Ex.~\ref{quadratic}, consider the set
\[
\mathcal{S}=\left\{(k,n)\in\mathbb{Z}^{2}: 0\leqslant n \leqslant k \, \wedge\ \, (k+n)-\dfrac{kn}{k+n}<\dfrac{\walpha}{\alpha}\right\}.
\]
This set is either finite or empty. In the case where $\mathcal{S}$ is empty, there is nothing to prove, as we fall under the previous cases. If $\mathcal{S}$ is finite, we define
\[
M_{\alpha,\walpha}:=\exp\left(\max_{(k,n)\in\mathcal{S}}(k-n)\left(\alpha(k^2+kn+k^2)-\walpha(k+n)\right)\right),
\]
and set $M := \max\{1, {M}_{\alpha,\walpha}\}$. This ensures that the first inequality holds, thereby establishing the desired result. 

The scenarios $k \geqslant 0 \geqslant n$ and $0 \geqslant k \geqslant n$ proceed similarly following the reasoning established in Ex.~\ref{quadratic}.
    }
\end{example}

\begin{example}\label{poli}
{\rm
Consider $\exp$ and $p$, the exponential and polynomial growth rates, respectively. We have $p\ll\exp$. Indeed, fix $\alpha,\walpha<0$ and consider $k\geqslant n\geqslant 0$. Let us consider five cases:
\begin{itemize}
    \item Case 1: $n=0$. Then, we define
    $$M_1=\max\left\{1,\,\sup_{k\geqslant 1}\left\{k^{-\walpha}\cdot e^{\alpha k}\right\}\right\}<+\infty,$$
    and we obtain
        \begin{align*}
        e^{\alpha(k-0)}\leqslant M_1  \left(\frac{p(k)}{p(0)}\right)^\walpha.
    \end{align*}

    \item Case 2: $k=n$ and $n\geqslant 1$. Then
    \begin{align*}
    1\leqslant M_1\Rightarrow    e^{\alpha(n-n)}\leqslant M_1 \left(\frac{n}{n}\right)^\walpha=M_1 \left(\frac{p(n)}{p(n)}\right)^\walpha.
    \end{align*}

    \item Case 3: $k=n+1$ and $n\geqslant 1$. Define
    \[
    M_2=\sup_{n\geqslant 1}\left\{\left(\frac{n+1}{n}\right)^{-\walpha}\cdot e^\alpha\right\}=\left(\frac{3}{2}\right)^{-\walpha}e^\alpha<+\infty,
    \]
and we have
\begin{align*}
        e^{\alpha(n+1-n)}=e^\alpha\leqslant M_2 \left(\frac{n+1}{n}\right)^\walpha=M_2 \left(\frac{p(n+1)}{p(n)}\right)^\walpha.
    \end{align*}

\item Case 4: $k=3$ and $n=1$. Define $M_3=3^{-\walpha}\cdot e^{2\alpha}$. We obtain 
\begin{align*}
        e^{\alpha(3-1)}=e^\alpha\leqslant M_3 \left(\frac{3}{1}\right)^\walpha=M_3 \left(\frac{p(3)}{p(1)}\right)^\walpha.
    \end{align*}

    \item Case 5: $k\geqslant n+2$ and $n\geqslant 2$. Then we have $n+(k-n)\leqslant n(k-n)$, thus 
\begin{align*}
    \frac{n+(k-n)}{n}\leqslant k-n \, &\Longrightarrow  \left(\frac{n+(k-n)}{n}\right)^{-\walpha}\leqslant (k-n)^{-\walpha}\\
    &\Longrightarrow e^{\alpha(k-n)}\left(\frac{n+(k-n)}{n}\right)^{-\walpha}\leqslant (k-n)^{-\walpha}e^{\alpha(k-n)}\leqslant M_1\\
    &\Longrightarrow e^{\alpha(k-n)}\leqslant M_1 \left(\frac{k}{n}\right)^\walpha=M_1\left(\frac{p(k)}{p(n)}\right)^\walpha.
\end{align*}
\end{itemize}

Then, it is enough to define $M=\max\{M_1,M_2,M_3\}$, and we have obtained
\[
e^{\alpha(k-n)}\leqslant M\left(\frac{p(k)}{p(n)}\right)^\walpha,\qquad\forall\,0\leqslant n\leqslant k.
\]
The other cases are handled similarly to Example \ref{quadratic}, thus proving the claim.
}
\end{example}

\begin{figure}[h]
		\centering
		\includegraphics[width=0.60\textwidth]{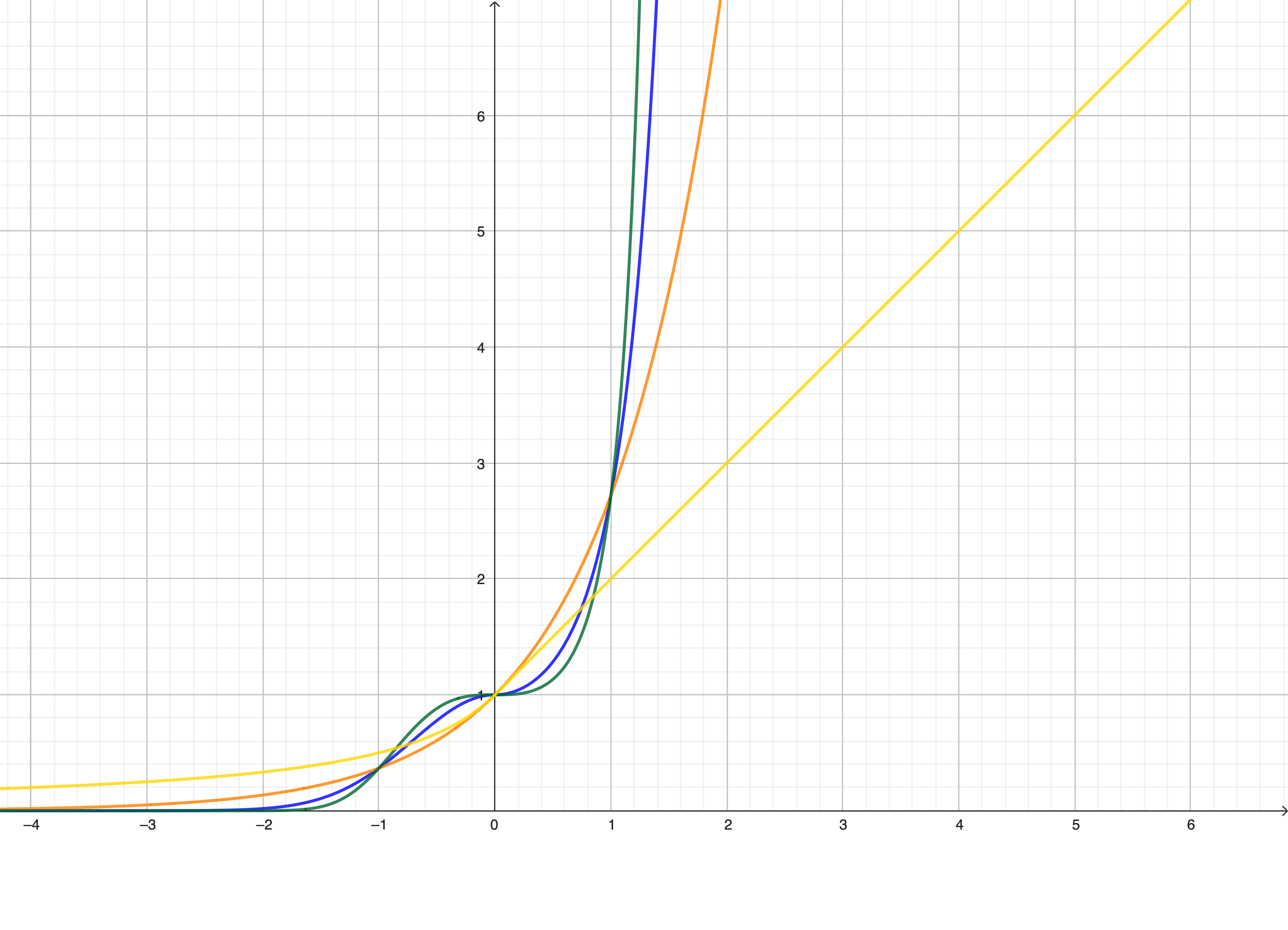}
		\caption{graphs of the exponential (orange), quadratic exponential (blue), cubic exponential (green) and polynomial (yellow) growth rates.}
\end{figure}

\begin{remark}\label{807}
{\rm
Note that $\ll$ defines a transitive relation in the collection of growth rates. In particular, as Example \ref{quadratic} proved $\exp\ll\qeg$, Example \ref{cubic} proved $\qeg\ll\ceg$ and  Example \ref{poli} proved $p\ll\exp$, then we can write $p\ll\exp\ll\qeg\ll\ceg$.
}
\end{remark}

In the context of differential equations, analogous proofs to the given above demonstrate again $p\ll\exp\ll\qeg\ll\ceg$. The main difference is that in the continuous time framework the finite sets employed in the proofs transform into compact sets.

In the following, we study the consequences of the existence of dichotomy for a slower growth rate.

\begin{theorem}\label{805}
    Consider two growth rates $\mu,\omega$ such that $\mu$ is faster than $\omega$. Suppose system \eqref{700} has $\muD$. Then, its $\omega\mathrm{D}$ spectrum is either $\{+\infty\}$, $\{-\infty\}$ or $\{\pm\infty\}$.
\end{theorem}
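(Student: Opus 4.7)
The plan is to prove that no finite $\gamma\in\mathbb{R}$ can belong to $\Sigma_{\omegaD}(A)$. Since the Spectral Theorem guarantees that $\Sigma_{\omegaD}(A)$ is a nonempty finite union of closed intervals in $\mathbb{R}\cup\{\pm\infty\}$ (with at least $m=1$ interval), excluding every real point forces the spectrum to collapse to a nonempty subset of $\{-\infty,+\infty\}$, yielding exactly the three listed alternatives.

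To exclude a fixed finite $\gamma$, let $(\P;\alpha,\beta)$ and $K$ be the parameters and constant of the hypothesized $\muD$. The natural candidate for a projector of $\omegaD$ of the $(\omega,\gamma)$-weighted system is the very same $\P$. I would select targets
$$\alpha':=-(|\gamma|+1)<0,\qquad\beta':=|\gamma|+1>0,$$
tailored so that $\alpha'+\gamma<0$ and $\beta'+\gamma>0$ simultaneously. Applying Definition \ref{901} with exponent pair $(\alpha,\alpha'+\gamma)$ and Lemma \ref{4} with exponent pair $(\beta,\beta'+\gamma)$ produces constants $M_1,M_2\geqslant 1$ such that $(\mu(k)/\mu(n))^\alpha$ is bounded by $M_1(\omega(k)/\omega(n))^{\alpha'+\gamma}$ for $k\geqslant n$, and $(\mu(k)/\mu(n))^\beta$ is bounded by $M_2(\omega(k)/\omega(n))^{\beta'+\gamma}$ for $k\leqslant n$.

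Multiplying the resulting bounds by $(\omega(k)/\omega(n))^{-\gamma}$ and invoking the identity $\Phi_{\omega,\gamma}(k,n)=(\omega(k)/\omega(n))^{-\gamma}\Phi(k,n)$ (the analogue of \eqref{761} with $\omega$ in place of $\mu$) then exhibits $\omegaD$ for the $(\omega,\gamma)$-weighted system with parameters $(\P;\alpha',\beta')$ and constant $K\max\{M_1,M_2\}$. Since $\gamma\in\mathbb{R}$ was arbitrary, this yields $\Sigma_{\omegaD}(A)\cap\mathbb{R}=\emptyset$, completing the argument together with the Spectral Theorem.

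I do not foresee a serious obstacle: the comparison $\mu\gg\omega$ is designed precisely to allow trading a $\mu$-exponential estimate for an $\omega$-exponential estimate with any prescribed negative target exponent (and, via Lemma \ref{4}, any prescribed positive target exponent in the unstable direction). The only bookkeeping point is to pick $\alpha'$ and $\beta'$ so that shifting them by $\gamma$ still produces admissible negative, respectively positive, comparison exponents, which is automatic with the choice above.
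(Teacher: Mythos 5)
Your proof is correct and follows essentially the same route as the paper: you show every finite $\gamma$ lies in the $\omega$-resolvent by re-weighting the $\mu$-dichotomy estimates via Definition~\ref{901} and Lemma~\ref{4}, using the same invariant projector $\P$. The only cosmetic difference is in the choice of intermediate exponents (you prescribe the target $\alpha'$ and shift by $\gamma$; the paper picks $\widetilde\alpha<\min\{\gamma,0\}$ and lands on $\widetilde\alpha-\gamma$), and in the final step: where you invoke the Spectral Theorem's guarantee that $\Sigma_{\omega\mathrm{D}}(A)$ is a nonempty union of intervals (hence a nonempty subset of $\{\pm\infty\}$), the paper instead determines explicitly which of the three alternatives occurs by distinguishing the cases $\P=0$, $\P=\Id$, $0\neq\P\neq\Id$ and appealing to uniqueness of the projector. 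Both close the argument cleanly; the paper's version yields the slightly sharper information of exactly which alternative holds as a function of $\P$.
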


\begin{proof}
  Suppose \eqref{700} has $\muD$ with parameters $(\P;\alpha,\beta)$ and constant $K\geqslant1$.  Let $\gamma\in \mathbb{R}$ and choose $\walpha<\min\{\gamma,0\}$ and $\wbeta>\max\{\gamma,0\}$. Suppose first that $0\neq\P\neq \Id$. 
  
  As $\mu$ is faster than $\omega$, there is $M\geqslant 1$ such that
 \[\left(\frac{\mu(k)}{\mu(n)}\right)^\alpha\leqslant M\left(\frac{\omega(k)}{\omega(n)}\right)^{\widetilde{\alpha}},\quad \forall\,k\geqslant n,\]
 and 
    \[\left(\frac{\mu(k)}{\mu(n)}\right)^\beta\leqslant M\left(\frac{\omega(k)}{\omega(n)}\right)^{\widetilde{\beta}},\quad \forall\,n\geqslant k.\]

    Then, we have
    \begin{eqnarray*}
        \|\Phi_{\omega,\gamma}(k,n)\P(n)\|=\|\Phi(k,n)\P(n)\|\cdot\left(\frac{\omega(k)}{\omega(n)}\right)^{-\gamma}\leqslant  KM\left(\frac{\omega(k)}{\omega(n)}\right)^{\widetilde{\alpha}-\gamma},\quad \forall\,k\geqslant n,
    \end{eqnarray*}
    and
    \begin{align*}
        \|\Phi_{\omega,\gamma}(k,n)[\Id-\P(n)]\|&=\|\Phi(k,n)[\Id-\P(n)]\|\cdot\left(\frac{\omega(k)}{\omega(n)}\right)^{-\gamma}\\
        &\leqslant  KM\left(\frac{\omega(k)}{\omega(n)}\right)^{\wbeta-\gamma},\quad \forall\,n\geqslant k,
    \end{align*}
    which means that the $(\omega,\gamma)$-weighted system admits $\omega$-dichotomy, or equivalently $\gamma\in \rho_{\omega\mathrm{D}}(A)$. Since $\gamma\in \mathbb{R}$ was arbitrary, $0\neq\P\neq \Id$ and the projectors associated to $\mu$-dichotomies are unique, then $\Sigma_{\omega\mathrm{D}}(A)=\{\pm\infty\}$.

    Finally, if $\P=\Id$, an analogous argument shows that $\Sigma_\omegaD(A)=\{-\infty\}$ and if $\P=0$ then $\Sigma_\omegaD(A)=\{+\infty\}$.
\end{proof}



\begin{corollary}\label{721}
    If system \eqref{700} has $\muD$, then it does not have $\omega$-bounded growth for any growth rate $\omega$ which is slower than $\mu$.
\end{corollary}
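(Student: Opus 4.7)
My plan is to argue by contradiction, invoking Theorem~\ref{805} together with part (iv) of the Spectral Theorem. Suppose, for the sake of contradiction, that system \eqref{700} admits $\muD$ and simultaneously has $\omega$-bounded growth for some growth rate $\omega$ with $\omega\ll\mu$.

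Since $\mu\gg\omega$ and the system admits $\muD$, Theorem~\ref{805} applies directly and yields that
\[
\Sigma_{\omegaD}(A)\in\bigl\{\{+\infty\},\,\{-\infty\},\,\{\pm\infty\}\bigr\}.
\]
In particular, $\Sigma_{\omegaD}(A)$ contains at least one of the symbols $\pm\infty$, so it fails to be a bounded subset of $\mathbb{R}\cup\{\pm\infty\}$ in the sense required by the Spectral Theorem.

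On the other hand, the assumption that \eqref{700} has $\omega$-bounded growth allows us to apply part (iv) of the Spectral Theorem, which asserts that in that case $\Sigma_{\omegaD}(A)$ must be a bounded subset of $\mathbb{R}$. This directly contradicts the conclusion obtained from Theorem~\ref{805}, and hence no such growth rate $\omega\ll\mu$ can satisfy the $\omega$-bounded growth condition simultaneously with the existence of $\muD$.

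There is no real obstacle here: the corollary is essentially a one-line consequence once Theorem~\ref{805} and the Spectral Theorem (iv) are in place. The only subtle point to keep in mind is that the conclusion of Theorem~\ref{805} forces the spectrum to contain a point at infinity regardless of which projector $\P$ appears in the $\muD$ (the three cases $\P=0$, $\P=\Id$, $0\neq\P\neq\Id$ are all covered within the statement), so no case analysis is needed on the side of the corollary.
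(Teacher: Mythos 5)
Your argument is correct and is the same one the paper uses: invoke Theorem~\ref{805} to see that $\Sigma_{\omegaD}(A)$ must be one of $\{+\infty\}$, $\{-\infty\}$, $\{\pm\infty\}$, hence unbounded, and then contradict this with the boundedness of the spectrum guaranteed by part (iv) of the Spectral Theorem under $\omega$-bounded growth. The paper merely states this as a one-line consequence; your write-up is a faithful elaboration of the same reasoning.
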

\begin{proof}
    This follows from the fact that if \eqref{700} had $\omega$-growth for some $\omega\ll\mu$, then it has bounded $\omega\mathrm{D}$ spectrum.
\end{proof}

\begin{example}
 {\rm   Consider the scalar equation $x(n+1)=\mathfrak{a}(n)x(n)$, where
    \[\mathfrak{a}(n)=\exp\left(-3n^2-3n-1\right),\]
    with evolution operator $\Phi(k,n)=e^{-(k^3-n^3)}$. If we consider the cubic exponential growth rate $\ceg(n)=e^{n^3}$, it clearly has $\cD$ with parameters $(\Id;1,*)$. 
    
    As established in Remark \ref{807}, the exponential growth rate ($\exp$) satisfies $\exp \ll \ceg$. Hence, Theorem \ref{805} implies that $\Sigma_{\exp\mathrm{D}}(\mathfrak{a}) \subset \{\pm\infty\}$. Moreover, we can check that for every $\gamma\in \mathbb{R}$, the invariant projector associated to the $(\exp,\gamma)$-weighted system is the identity. Thus, we have $\Sigma_\eD(\mathfrak{a})=\{-\infty\}$.

    Moreover, since the system satisfies $\ceg$-growth, Corollary \ref{721} allows us to conclude that it does not satisfy $\exp$-growth.

    }
\end{example}

    

Now we delve into a sort of dual problem, where we consider how the fact that a system has bounded growth of a certain growth rate affects all other faster growth rates.

\begin{theorem}\label{806}
    If \eqref{700} has $\omega$-bounded growth and $\mu$ is faster than $\omega$, then $\Sigma_\muD(A)=\{0\}$. 
\end{theorem}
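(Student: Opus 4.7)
The plan is to split the proof of $\Sigma_\muD(A)=\{0\}$ into three stages: (i) show that every real $\gamma>0$ belongs to $\rho_\muD(A)$ with associated projector $\Id$; (ii) show symmetrically that every real $\gamma<0$ belongs to $\rho_\muD(A)$ with projector $0$; (iii) deduce that $\pm\infty\in\rho_\muD(A)$ via the convention of Remark \ref{910}, and that $0\in\Sigma_\muD(A)$ from the incompatibility of the two projectors obtained in (i) and (ii).

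For step (i), I would fix $\gamma>0$, start from the $\omega$-growth estimate $\|\Phi(k,n)\|\leqslant\widehat{K}(\omega(k)/\omega(n))^{a}$ valid for $k\geqslant n$, and use Definition \ref{901} (after taking reciprocals, with $\alpha=-\gamma/2$ and $\widetilde{\alpha}=-a$ when $a>0$, and trivially when $a=0$) to produce $M\geqslant 1$ such that $(\omega(k)/\omega(n))^{a}\leqslant M(\mu(k)/\mu(n))^{\gamma/2}$ for $k\geqslant n$. Multiplying the $\omega$-growth bound by the weight $(\mu(k)/\mu(n))^{-\gamma}$ then yields
\[
\|\Phi_{\mu,\gamma}(k,n)\|\leqslant \widehat{K}M\left(\frac{\mu(k)}{\mu(n)}\right)^{-\gamma/2},\qquad k\geqslant n,
\]
which is precisely the $\muD$ estimate with projector $\Id$ and exponent $\alpha=-\gamma/2<0$; the companion bound is automatic since $\Id-\Id=0$. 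Hence $\gamma\in\rho_\muD(A)$ with $\P_\gamma=\Id$. Step (ii) is entirely analogous: I would invoke Lemma \ref{4} with $\beta=-\gamma/2>0$ and $\widetilde{\beta}=a$ to bound $(\omega(n)/\omega(k))^{a}$ for $n\geqslant k$ by a multiple of $(\mu(n)/\mu(k))^{-\gamma/2}$, and the same weighting argument produces $\muD$ with projector $0$ and exponent $\beta=-\gamma/2>0$.

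Combining (i) and (ii) gives $\mathbb{R}\setminus\{0\}\subseteq\rho_\muD(A)$, and since any $\widehat{\gamma}>0$ (resp. $\widehat{\gamma}<0$) realizes a $\widehat{\gamma}$-weighted dichotomy with projector $\Id$ (resp. $0$), Remark \ref{910} immediately gives $\pm\infty\in\rho_\muD(A)$. For the reverse inclusion, suppose toward a contradiction that $0\in\rho_\muD(A)$. Pick $\gamma_1<0<\gamma_2$; then the whole interval $[\gamma_1,\gamma_2]$ lies in $\rho_\muD(A)$ by the previous steps together with the hypothesis, and property (ii) of the Spectral Theorem would force $\P_{\gamma_1}=\P_{\gamma_2}$. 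However, steps (i) and (ii) yield $\P_{\gamma_1}=0$ and $\P_{\gamma_2}=\Id$, which is impossible for $d\geqslant 1$. Hence $0\in\Sigma_\muD(A)$, and $\Sigma_\muD(A)=\{0\}$.

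The main obstacle is precisely the synthesis in step (iii): the inequalities in (i) and (ii) amount to elementary algebra once the exponents $\pm\gamma/2$ are correctly identified, but the crucial spectral conclusion comes from recognizing that the jump in projectors between positive and negative $\gamma$ forces $0$ to be spectral. This requires appealing to the structural statements about invariant projectors in the Spectral Theorem, rather than to the dichotomy estimates themselves; verifying directly that the unweighted system fails to admit $\muD$ would be considerably more delicate.
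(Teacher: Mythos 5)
Your proposal is correct and follows essentially the same route as the paper: establish the $\mu\mathrm{D}$ estimate for the $\gamma$-weighted system by trading the $\omega$-growth factor for a $\mu$-power via Definition~\ref{901}, obtaining projector $\Id$ for $\gamma>0$ and $0$ for $\gamma<0$, then conclude $0\in\Sigma_{\mu\mathrm{D}}(A)$ from the mismatch of projectors. The only cosmetic differences are that the paper chooses a generic $\alpha\in(-\gamma,0)$ where you fix $\alpha=-\gamma/2$, and the paper phrases the final step directly rather than as a contradiction invoking Spectral Theorem~(ii); both are equivalent.
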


\begin{proof}
If \eqref{700} has $\omega$-bounded growth, there is some $a>0$ and $K\geqslant 1$ such that
\[\|\Phi(k,n)\|\leqslant \left(\frac{\omega(k)}{\omega(n)}\right)^{\sgn(k-n)a},\quad\forall\,k,n\in \mathbb{Z}.
\]
In particular,
\[\|\Phi(k,n)\|\leqslant \left(\frac{\omega(k)}{\omega(n)}\right)^{a},\quad\forall\,k\geqslant n.\]
Hence, for $\gamma>0$
\[\|\Phi_{\mu,\gamma}(k,n)\|\leqslant \left(\frac{\omega(k)}{\omega(n)}\right)^{a}\left(\frac{\mu(k)}{\mu(n)}\right)^{-\gamma},\quad\forall\,k\geqslant n.
\]

Now, if we choose $\alpha\in(-\gamma,0)$, then there exists $M\geqslant 1$ such that
        \[\left(\frac{\mu(k)}{\mu(n)}\right)^{\alpha}\leqslant M\left(\frac{\omega(k)}{\omega(n)}\right)^{-a},\quad \forall\,k\geqslant n,\]
or equivalently
        \[ \left(\frac{\omega(k)}{\omega(n)}\right)^{a}  \leqslant M \left(\frac{\mu(k)}{\mu(n)}\right)^{-\alpha},\quad \forall\,k\geqslant n.\]
        From which we derive
        \[
        \|\Phi_{\mu,\gamma}(k,n)\|\leqslant M\left(\frac{\mu(k)}{\mu(n)}\right)^{-\gamma-\alpha},\quad\forall\,k\geqslant n.
        \]
       Consequently, the $(\mu,\gamma)$-weighted system admits $\muD$ with parameters $(\Id; -\gamma - \alpha, *)$, implying that $\gamma \in \rho_{\muD}(A)$ and $(0, +\infty) \subset \rho_{\muD}(A)$, with $\Id$ as the corresponding invariant projector. An analogous argument shows that $(-\infty, 0) \subset \rho_{\muD}(A)$, with the zero operator as the invariant projector associated with this spectral gap. Therefore, as there can not be only one spectral gap, as they have different projector, we have $\Sigma_\muD(A)=\{0\}$.
\end{proof}

As a direct consequence, we obtain the following corollary.

\begin{corollary}\label{722}
    If system \eqref{700} has $\omega$-bounded growth, then it does not have $\mu$-dichotomy for any growth rate $\mu$ which is faster than $\omega$.
\end{corollary}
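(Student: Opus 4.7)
The plan is to deduce this corollary directly from Theorem \ref{806} by a one-line contradiction argument. The key observation is that the original system \eqref{700} corresponds precisely to the $(\mu,0)$-weighted system \eqref{701}, since setting $\gamma = 0$ in the weight factor $(\mu(k+1)/\mu(k))^{-\gamma}$ leaves $A(k)$ unchanged. Consequently, the statement ``\eqref{700} admits $\muD$'' is, by Definition \ref{dichspectrum}, exactly the statement that $0 \in \rho_\muD(A)$.

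With this observation in hand, I would proceed as follows. Assume for contradiction that \eqref{700} has $\omega$-bounded growth and simultaneously admits $\muD$ for some growth rate $\mu$ with $\mu \gg \omega$. By the observation above, $0 \in \rho_\muD(A)$. On the other hand, Theorem \ref{806} applies under precisely these hypotheses and yields $\Sigma_\muD(A) = \{0\}$, hence $0 \in \Sigma_\muD(A)$. Since $\Sigma_\muD(A)$ and $\rho_\muD(A)$ are complementary in $\mathbb{R} \cup \{\pm \infty\}$, this is a contradiction.

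There is essentially no obstacle here; the entire argument is a reformulation of Theorem \ref{806}, and the only thing to make explicit is the identification of the original system with the $0$-weighted system. The proof will therefore be only a few lines long.
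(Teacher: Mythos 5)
Your proof is correct and is essentially the argument the paper leaves implicit when it states the corollary is ``a direct consequence'' of Theorem~\ref{806}: since the original system coincides with the $(\mu,0)$-weighted system, having $\muD$ is equivalent to $0\in\rho_{\muD}(A)$, which contradicts $\Sigma_{\muD}(A)=\{0\}$. You have simply made the one-line identification explicit; no difference in approach.
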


\begin{remark}\label{907}
{\rm
Corollaries \ref{721} and \ref{722} highlight a duality between the concepts of bounded growth and dichotomy. From a qualitative perspective, having $\mu$-growth is an indicator of how slow the growth of the system is, while having $\mu$-dichotomy is an indicator of how fast the decay on the system is. 

In order to manage both concepts, some authors have defined the notion of {\it strong} dichotomy (see 
\cite[Def 2.2]{BV} for a definition on the nonuniform framework). It is proved that a system exhibits strong dichotomy if and only if it verifies both dichotomy and bounded growth.
    }
\end{remark}

Let us illustrate this duality with examples:

\begin{example}
    {\rm
    Consider the following non autonomous differential systems
\begin{itemize}
    \item [i)] $\dot{x}=\frac{1}{1+|t|}x$.
    
    \smallskip
    
\begin{itemize}
    \item [$\bullet$] Has cubic, quadratic, exponential and polynomial growth.
    \item [$\bullet$] Has polynomial dichotomy, but not exponential, quadratic nor cubic exponential dichotomy.
    \item [$\bullet$] Has strong polynomial dichotomy.
    \item [$\bullet$] $\Sigma_{p\mathrm{D}}\left(\frac{1}{1+|t|}\right)=\{1\}\quad\text{and}\quad\Sigma_{\ED}\left(\frac{1}{1+|t|}\right)=\Sigma_{\qeg\mathrm{D}}\left(\frac{1}{1+|t|}\right)=\Sigma_{\ceg\mathrm{D}}\left(\frac{1}{1+|t|}\right)=\{0\}$.
\end{itemize}

\smallskip

\item  [ii)] $\dot{x}=2|t|x$.

\smallskip

\begin{itemize}
    \item [$\bullet$] Has and cubic and quadratic growth, but not polynomial nor exponential growth.
    \item [$\bullet$] Has polynomial, exponential and quadratic dichotomy, but not cubic dichotomy.
    \item [$\bullet$] Has strong quadratic dichotomy.
    \item [$\bullet$] $\Sigma_{p\mathrm{D}}\left(2|t|\right)\!=\!\Sigma_{\ED}\left(2|t|\right)\!=\!\{+\infty\}$,\,\,\, $\Sigma_{\qeg\mathrm{D}}\left(2|t|\right)\!=\!\{1\}$ \,\,\,and  \,\,\,$\Sigma_{\ceg\mathrm{D}}\left(2|t|\right)\!=\!\{0\}$.
\end{itemize}

\smallskip

\item [iii)] $\dot{x}=3t^2x$.

\smallskip

\begin{itemize}
    \item [$\bullet$] Has cubic growth, but not polynomial, exponential nor quadratic growth.
    \item [$\bullet$] Has cubic, quadratic, exponential and polynomial dichotomy.
    \item [$\bullet$] Has strong cubic dichotomy.
    \item [$\bullet$] $\Sigma_{p\mathrm{D}}(3t^2)=\Sigma_{\ED}(3t^2)=\Sigma_{\qeg\mathrm{D}}(3t^2)=\{+\infty\}\quad\text{and}\quad \Sigma_{\ceg\mathrm{D}}(3t^2)=\{1\}$.
\end{itemize}
\end{itemize}
    }
\end{example}


\section{A weak comparison}\label{section4}

In this section, we present and investigate a weak notion of comparison between growth rates, and derive its spectral implications.

\begin{definition}\label{902}
    Consider two growth rates $\mu,\omega:\mathbb{Z}\to \mathbb{R}^+$. We say $\mu$ is \textbf{weakly faster} than $\omega$ ($\mu\succ\omega$) if for every $\alpha<0$ there exist $M\geqslant1$ such that
    \begin{equation}\label{802}
        \left(\frac{\mu(k)}{\mu(n)}\right)^\alpha\leqslant M\left(\frac{\omega(k)}{\omega(n)}\right)^{\alpha},\quad \forall\,k\geqslant n.
    \end{equation}

    Equivalently, in this case we say $\omega$ is \textbf{weakly slower} than $\mu$ ($\omega\prec\mu$).
\end{definition}

An immediate remark is that $\omega\ll\mu$ implies $\omega\prec\mu$. Nevertheless, the converse is not true.

\begin{example}\label{810}
    {\rm Let $\omega:\mathbb{Z}\to\mathbb{R}$ be a growth rate and consider $\mu(k)=(\omega(k))^{\theta}$, with $\theta\geqslant1$, for all $k\in\mathbb{Z}$. One can readily verify that $\omega\prec\mu$. Indeed, let $\alpha<0$ be fixed and consider $k\geqslant n$. Since
     \begin{eqnarray*}
    \left(\frac{\mu(k)}{\mu(n)}\right)^\alpha\leqslant M\left(\frac{\omega(k)}{\omega(n)}\right)^{\alpha}&\Longleftrightarrow& 
         \alpha(\theta-1)\log\left(\frac{\omega(k)}{\omega(n)}\right)\leqslant \log(M),
\end{eqnarray*}
it is enough to consider $M=1$ and the inequality \eqref{802} trivially holds. Moreover, the reader can verify that $\omega\not\ll\mu$.}
\end{example}


An analogue of Lemma~\ref{4} is derived concerning weakly faster growth rates.

\begin{remark}\label{905}
    {\rm $\mu$ is weakly faster than $\omega$ if and only  if there exist $0<m<1$ such that
    \begin{equation*}
     m\cdot \frac{\omega(t)}{\omega(s)} \leq  \frac{\mu(t)}{\mu(s)},\quad \forall\,t\geq s.
    \end{equation*}
or equivalently
\begin{equation*}
     m\cdot \frac{\mu(s)}{\omega(s)} \leq  \frac{\mu(t)}{\omega(s)},\quad \forall\,t\geq s.
    \end{equation*}

In particular, if $t\mapsto \frac{\mu(t)}{\omega(t)}$ is increasing, then $\omega\prec\mu$.

 
 }
\end{remark}

We now focus on the main objective of this section, namely the analysis of the spectral consequences that follow from this weak comparison.

\begin{theorem}\label{808}
    Consider two growth rates $\mu,\omega:\mathbb{Z}\to\mathbb{R}^+$ such that $\mu$ is weakly faster than $\omega$.  Fix $a>0$. Then, for system \eqref{700}:
    \begin{itemize}
        \item [i)] If $\Sigma_\muD(A)\subset [-\infty,-a]$, then $\Sigma_\omegaD(A)\subset [-\infty,-a]$,
        \item [ii)] If $\Sigma_\muD(A)\subset [a,+\infty]$, then $\Sigma_\omegaD(A)\subset [a,+\infty]$.
    \end{itemize}
\end{theorem}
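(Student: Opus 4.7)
The plan is to prove (i) and note that (ii) follows by a symmetric argument. The hypothesis $\Sigma_\muD(A)\subset[-\infty,-a]$ forces the spectral gap containing $+\infty$ to have the form $(b,+\infty]$ with $b\leqslant -a$, and by Remark~\ref{910} together with the Spectral Theorem the invariant projector on this gap equals $\Id$. My goal is to show $(-a,+\infty]\subset\rho_\omegaD(A)$, which I would do by fixing an arbitrary $\gamma>-a$ and producing a $\omegaD$ estimate for the $(\omega,\gamma)$-weighted system after transferring a $\muD$ bound through an auxiliary weight $\gamma_0$.

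Concretely, I would choose $\gamma_0\in(-a,\min(\gamma,0))$ --an interval nonempty precisely because $a>0$ and $\gamma>-a$-- and exploit that $\gamma_0\in\rho_\muD(A)$ with projector $\Id$ to produce constants $K\geqslant 1$ and $\alpha_0<0$ such that
\[
\|\Phi(k,n)\|\leqslant K\left(\frac{\mu(k)}{\mu(n)}\right)^{\eta},\qquad k\geqslant n,
\]
where $\eta:=\alpha_0+\gamma_0$. The decisive observation is that $\eta<0$ automatically, since both $\alpha_0$ and $\gamma_0$ are negative. This negativity is precisely the input required to apply the weak comparison of Def.~\ref{902} at the value $\eta$, yielding $M\geqslant 1$ with $(\mu(k)/\mu(n))^\eta\leqslant M(\omega(k)/\omega(n))^\eta$ for $k\geqslant n$. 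Multiplying by $(\omega(k)/\omega(n))^{-\gamma}$ and using $\eta<\gamma_0<\gamma$ then gives $\omegaD$ of the $(\omega,\gamma)$-weighted system with parameters $(\Id;\eta-\gamma,*)$, so that $\gamma\in\rho_\omegaD(A)$. The inclusion $+\infty\in\rho_\omegaD(A)$ follows from the same bound via Remark~\ref{910}.

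Part (ii) runs in the mirrored way. From Remark~\ref{905} one first derives the companion inequality: for every $\beta>0$ there exists $M\geqslant 1$ with $(\mu(k)/\mu(n))^\beta\leqslant M(\omega(k)/\omega(n))^\beta$ for all $k\leqslant n$. Then, for each $\gamma<a$, I would pick $\gamma_0\in(\max(\gamma,0),a)$ --nonempty thanks to $a>0$-- so that $\gamma_0\in\rho_\muD(A)$ with projector $0$; extract the bound $\|\Phi(k,n)\|\leqslant K(\mu(k)/\mu(n))^{\beta_0+\gamma_0}$ for $k\leqslant n$ with positive exponent $\beta_0+\gamma_0$; and apply the companion comparison at this exponent to produce $\omegaD$ of the $(\omega,\gamma)$-weighted system with parameters $(0;*,\beta_0+\gamma_0-\gamma)$. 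The main obstacle throughout is this matching of signs: unlike the stronger notion $\omega\ll\mu$, the weak comparison $\mu\succ\omega$ only compares $\mu$ and $\omega$ at identical exponents, so one must pick the auxiliary weight $\gamma_0$ carefully so that the pointwise bound on $\|\Phi\|$ yields an exponent of the correct sign (negative in (i), positive in (ii)). The strict positivity of $a$ is what ensures the required intervals for $\gamma_0$ are nonempty.
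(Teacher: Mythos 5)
Your proof is correct and follows essentially the same route as the paper: transfer a $\mu$D estimate on a weighted evolution operator to an $\omega$D estimate via the weak comparison applied at a negative exponent, and deduce $(-a,+\infty]\subset\rho_\omegaD(A)$ with projector $\Id$. The only presentational difference is that the paper takes $\gamma\in(-a,0)$, applies the weak comparison separately at $\alpha$ and at $\gamma$ (hence the constant $M^2$), and then extends to $[0,+\infty]$ by monotonicity, whereas you fold the two exponents into $\eta=\alpha_0+\gamma_0<0$, apply the comparison once, and thereby cover every $\gamma>-a$ directly through the auxiliary weight $\gamma_0$.
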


\begin{proof}
We will only prove i) since ii) is analogous. If $\gamma\in(-a,0)$, then the $(\mu,\gamma)$-weighted has $\muD$ with parameters $(\Id;\alpha,*)$, for some $\alpha<0$. As $\omega\prec\mu$, then there exists $M\geqslant 1$ such that
 \begin{equation*}
        \left(\frac{\mu(k)}{\mu(n)}\right)^\alpha\leqslant M\left(\frac{\omega(k)}{\omega(n)}\right)^{\alpha}\quad\text{and}\quad\left(\frac{\mu(k)}{\mu(n)}\right)^\gamma\leqslant M\left(\frac{\omega(k)}{\omega(n)}\right)^{\gamma},\qquad \forall\,k\geqslant n.
    \end{equation*}
Then we get
      \begin{align*}
        \|\Phi_{\omega,\gamma}(k,n)\|&=\|\Phi(k,n)\|\cdot\left(\frac{\omega(k)}{\omega(n)}\right)^{-\gamma}\\
        &=\|\Phi_{\mu,\gamma}(k,n)\|\cdot  \left(\frac{\mu(k)}{\mu(n)}\right)^{\gamma}\left(\frac{\omega(k)}{\omega(n)}\right)^{-\gamma}\\
        &\leqslant K\cdot \left(\frac{\mu(k)}{\mu(n)}\right)^{\alpha} \left(\frac{\mu(k)}{\mu(n)}\right)^{\gamma}\left(\frac{\omega(k)}{\omega(n)}\right)^{-\gamma}\\
        &\leqslant M^2 K\cdot \left(\frac{\omega(k)}{\omega(n)}\right)^{\alpha} \left(\frac{\omega(k)}{\omega(n)}\right)^{\gamma}\left(\frac{\omega(k)}{\omega(n)}\right)^{-\gamma}\\
        &= M^2 K\cdot \left(\frac{\omega(k)}{\omega(n)}\right)^{\alpha},\quad \forall\,k\geqslant n.
    \end{align*}
Hence, we infer that $\gamma \in \rho_{\muD}(A)$, with $\Id$ as the corresponding invariant projector. This implies that $(-a, 0) \subset \rho_{\muD}(A)$ with invariant projector $\Id$. Consequently, $(-a, +\infty] \subset \rho_{\muD}(A)$, which leads to the inclusion $\Sigma_{\muD}(A) \subset [-\infty, -a]$.
\end{proof}

In particular, the previous result shows that if \eqref{700} admits $\muD$, then it immediately admits $\omegaD$. 

The following two figures illustrate the behavior described in items (i) and (ii) of Theorem~\ref{808}, respectively.

\begin{center}
    \begin{tikzpicture}[scale=1.0, thick]
  \draw[<->] (-4,0) -- (5,0);
  \node at (-4,0.5) {$-\infty$};
  
  \draw[blue, thick] (-1.5,0.75) -- (-1.5,-0.75);
  \draw[blue, thick] (-1.5,0.75) -- (-1.6,0.75);
  \draw[blue, thick] (-1.5,-0.75) -- (-1.6,-0.75);
  \node[blue] at (-1.0,1.0) {$\Sigma_{\omegaD}(A)$};

  \draw[red, thick] (0.3,0.75) -- (0.3,-0.75);
  \draw[red, thick] (0.3,0.75) -- (0.2,0.75);
  \draw[red, thick] (0.3,-0.75) -- (0.2,-0.75);
  \node[red] at (-0.1,-1.0) {$\Sigma_{\muD}(A)$};

  \draw[blue, ->, thick] (-2.0,0.2) -- (-2.6,0.2);
  \draw[blue, ->, thick] (-2.0,-0.2) -- (-2.6,-0.2);

  \draw[red, ->, thick] (-0.3,0.2) -- (-0.9,0.2);
  \draw[red, ->, thick] (-0.3,-0.2) -- (-0.9,-0.2);

  \foreach \x/\label in {2.5/{$-a$}, 3.5/{$0$}, 4.5/{$a$}} {
    \draw[black] (\x,0.2) -- (\x,-0.2);
    \node at (\x,-0.5) {\label};
  }

\end{tikzpicture}
\end{center}

\begin{center}
    \begin{tikzpicture}[scale=1.0, thick]
  \draw[<->] (3,0) -- (12,0);
  \node at (12,0.5) {$+\infty$};

  \draw[blue, thick] (8.5,0.75) -- (8.5,-0.75);
  \draw[blue, thick] (8.5,0.75) -- (8.6,0.75);
  \draw[blue, thick] (8.5,-0.75) -- (8.6,-0.75);
  \node[blue] at (9.0,1.0) {$\Sigma_{\omegaD}(A)$}; 

  \draw[red, thick] (6.7,0.75) -- (6.7,-0.75);
  \draw[red, thick] (6.7,0.75) -- (6.8,0.75);
  \draw[red, thick] (6.7,-0.75) -- (6.8,-0.75);
  \node[red] at (7.1,-1.0) {$\Sigma_{\muD}(A)$}; 

  \foreach \x/\label in {3.5/{$-a$}, 4.5/{$0$}, 5.5/{$a$}} {
    \draw[black] (\x,0.2) -- (\x,-0.2);
    \node at (\x,-0.5) {\label};
  }

  \draw[blue, ->, thick] (9.0,0.2) -- (9.6,0.2);
  \draw[blue, ->, thick] (9.0,-0.2) -- (9.6,-0.2);

  \draw[red, ->, thick] (6.8,0.2) -- (7.4,0.2);
  \draw[red, ->, thick] (6.8,-0.2) -- (7.4,-0.2);
\end{tikzpicture}
\end{center}

Later, we will see how these results relate to the conclusions of the previous section. For this goal, consider the following notations:

$$\Sigma_\muD^+(A)=\Sigma_\muD(A)\cap [0,+\infty],\qquad\Sigma_\muD^-(A)=\Sigma_\muD(A)\cap [-\infty,0]. $$

\begin{theorem}\label{809}
Consider two growth rates $\mu,\omega:\mathbb{Z}\to\mathbb{R}^+$ such that $\mu$ is weakly faster than $\omega$.  Fix $a$, $b$ with $a\leqslant 0\leqslant b$. Then, for system \eqref{700}
\begin{itemize}
    \item [i)] if $\Sigma_\omegaD^+(A)\subset[0,b]$ then $\Sigma_\muD^+(A)\subset[0,b]$,
    \item [ii)] if $\Sigma_\omegaD^-(A)\subset[a,0]$ then $\Sigma_\muD^-(A)\subset[a,0]$,
    \item [iii)] if $\Sigma_\omegaD(A)\subset[a,b]$ then $\Sigma_\muD(A)\subset[a,b]$.
\end{itemize}
\end{theorem}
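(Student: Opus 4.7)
The plan is to adapt the argument used in the proof of Theorem~\ref{808}, but propagating spectral information in the opposite direction: from $\omegaD$ to $\muD$ rather than the other way around. Parts (i) and (ii) are symmetric, handling respectively the unbounded right and left spectral gaps, and (iii) will follow by combining them.

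For part (i), I would fix an arbitrary $\gamma\in(b,+\infty)$. Since $b\in\mathbb{R}$ and $\Sigma_\omegaD^+(A)\subset[0,b]$, we have $(b,+\infty]\subset\rho_\omegaD(A)$, and this half-line is entirely contained in the unbounded right spectral gap. By the Spectral Theorem together with the convention in Remark~\ref{910}, every $\gamma$ in this gap has invariant projector $\Id$. Hence there exist $K\geqslant 1$ and $\alpha<0$ with
\[
\|\Phi_{\omega,\gamma}(k,n)\|\leqslant K\left(\tfrac{\omega(k)}{\omega(n)}\right)^{\alpha},\quad k\geqslant n,
\]
equivalently
\[
\|\Phi(k,n)\|\leqslant K\left(\tfrac{\omega(k)}{\omega(n)}\right)^{\alpha+\gamma},\quad k\geqslant n.
\]
Invoking Remark~\ref{905}, fix $m\in(0,1]$ such that $\omega(k)/\omega(n)\leqslant m^{-1}\mu(k)/\mu(n)$ for all $k\geqslant n$. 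Then I would split into two subcases according to the sign of $\alpha+\gamma$. If $\alpha+\gamma>0$, raising this weak-comparison inequality to the power $\alpha+\gamma$ yields $(\omega(k)/\omega(n))^{\alpha+\gamma}\leqslant m^{-(\alpha+\gamma)}(\mu(k)/\mu(n))^{\alpha+\gamma}$, and therefore $\|\Phi_{\mu,\gamma}(k,n)\|\leqslant Km^{-(\alpha+\gamma)}(\mu(k)/\mu(n))^{\alpha}$; if $\alpha+\gamma\leqslant 0$, then $(\omega(k)/\omega(n))^{\alpha+\gamma}\leqslant 1$ since $\omega$ is non-decreasing, so $\|\Phi_{\mu,\gamma}(k,n)\|\leqslant K(\mu(k)/\mu(n))^{-\gamma}$. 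In either case the decay exponent is strictly negative, giving $\muD$ with projector $\Id$, so $\gamma\in\rho_\muD(A)$. As $\gamma$ ranges over $(b,+\infty)$ and appealing once more to Remark~\ref{910} to bring $+\infty$ into the resolvent set, we obtain $(b,+\infty]\subset\rho_\muD(A)$, equivalently $\Sigma_\muD^+(A)\subset[0,b]$.

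Part (ii) is entirely analogous: for $\gamma\in(-\infty,a)$, the point $\gamma$ lies in the unbounded left spectral gap of $\omegaD$, so its associated projector is $0$ and we obtain a bound of the form $\|\Phi_{\omega,\gamma}(k,n)\|\leqslant K(\omega(k)/\omega(n))^\beta$ for $k\leqslant n$ and some $\beta>0$. The symmetric form of $\omega\prec\mu$ (for $k\leqslant n$, $\mu(k)/\mu(n)\leqslant m^{-1}\omega(k)/\omega(n)$), combined with the same case split on the sign of $\beta+\gamma$, produces $\gamma\in\rho_\muD(A)$ with projector $0$, hence $\Sigma_\muD^-(A)\subset[a,0]$. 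Finally, (iii) is a corollary: $\Sigma_\omegaD(A)\subset[a,b]$ decomposes as $\Sigma_\omegaD^+(A)\subset[0,b]$ and $\Sigma_\omegaD^-(A)\subset[a,0]$, so applying (i) and (ii) yields $\Sigma_\muD^+(A)\subset[0,b]$ and $\Sigma_\muD^-(A)\subset[a,0]$, whence $\Sigma_\muD(A)\subset[a,b]$.

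The principal technical obstacle is the one-sidedness of $\omega\prec\mu$: raising the inequality $\omega(k)/\omega(n)\leqslant m^{-1}\mu(k)/\mu(n)$ to a negative exponent reverses its direction, so an $\omega$-decay estimate cannot be directly converted into a $\mu$-decay estimate. The case split on the sign of $\alpha+\gamma$ (and similarly $\beta+\gamma$ in part (ii)) circumvents exactly this issue: in the regime where the weak comparison fails to help, the offending $\omega$-factor is already bounded by $1$, so the weight $(\mu(k)/\mu(n))^{-\gamma}$ alone supplies the required decay.
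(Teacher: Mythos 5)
Your proof is correct, and it reaches the same conclusion by a route that is close in spirit but technically different from the paper's. The paper, given $\gamma>b$, first picks an intermediate point $\zeta\in(b,\gamma)$; since $-\zeta<0$, Definition~\ref{902} applies directly to give $\left(\tfrac{\mu(k)}{\mu(n)}\right)^{-\zeta}\leqslant M\left(\tfrac{\omega(k)}{\omega(n)}\right)^{-\zeta}$, from which the $\omega$-factors cancel and $\|\Phi_{\mu,\zeta}(k,n)\|$ is shown to be \emph{bounded}; the required negative decay exponent for $\Phi_{\mu,\gamma}$ then comes entirely from the residual weight $\left(\tfrac{\mu(k)}{\mu(n)}\right)^{\zeta-\gamma}$ with $\zeta-\gamma<0$. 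You instead work directly at $\gamma$ using the Remark~\ref{905} reformulation $\omega(k)/\omega(n)\leqslant m^{-1}\mu(k)/\mu(n)$, and absorb the one-sidedness of the weak comparison through a case split on $\sgn(\alpha+\gamma)$: when $\alpha+\gamma>0$ the comparison is raised to a positive power, and when $\alpha+\gamma\leqslant 0$ monotonicity of $\omega$ bounds the offending factor by $1$ while the weight $\left(\tfrac{\mu(k)}{\mu(n)}\right)^{-\gamma}$ (with $\gamma>b\geqslant 0$, hence $\gamma>0$) supplies the decay. Both arguments are valid; the paper's intermediate-$\zeta$ device neatly avoids the case split and makes the source of decay uniform, whereas yours is more direct but branches. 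Your handling of part (ii) by symmetry and of part (iii) by combining (i) and (ii) matches the paper exactly.
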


\begin{proof}
It suffices to prove item i), as the remaining cases are analogous. If $b= +\infty$, there is nothing to prove. If $b<+\infty$, then $+\infty\not\in \Sigma_\omegaD(A)$. This implies that for some $\zeta\in \mathbb{R}$, the $(\omega,\zeta)$-weighted system has $\omegaD$ with parameters $(\Id;\alpha,*)$ and constant $K\geqslant1$ for some $\alpha=\alpha(\zeta)<0$.

Moreover, since $(b, +\infty] \subset \rho_{\omegaD}(A)$, for every $\zeta > b$, the $(\omega, \zeta)$-weighted system admits $\omegaD$ with parameters $(\Id; \alpha, *)$, where $\alpha = \alpha(\zeta) < 0$. Therefore, for $\gamma>b\geqslant 0$, choose $\zeta\in (b,\gamma)$. The $(\omega,\zeta)$-weighted system has $\omegaD$ with parameters $(\Id;\alpha,*)$.

On the other hand, since $\omega\prec \mu$, there exists $M\geqslant 1$ such that
\begin{equation*}
        \left(\frac{\mu(k)}{\mu(n)}\right)^{-\zeta}\leqslant M\left(\frac{\omega(k)}{\omega(n)}\right)^{-\zeta},\quad \forall\,k\geqslant n.
    \end{equation*}

We now obtain the following estimates:
      \begin{align*}
        \|\Phi_{\mu,\zeta}(k,n)\|&=\|\Phi(k,n)\|\cdot\left(\frac{\mu(k)}{\mu(n)}\right)^{-\zeta}\\
        &\leqslant\|\Phi_{\omega,\zeta}(k,n)\|\cdot  \left(\frac{\omega(k)}{\omega(n)}\right)^{\zeta}\left(\frac{\mu(k)}{\mu(n)}\right)^{-\zeta}\\
        &\leqslant K\cdot \left(\frac{\omega(k)}{\omega(n)}\right)^{\alpha} \left(\frac{\omega(k)}{\omega(n)}\right)^{\zeta}\left(\frac{\mu(k)}{\mu(n)}\right)^{-\zeta}\\
        &\leqslant KM\cdot \left(\frac{\omega(k)}{\omega(n)}\right)^{\alpha} \left(\frac{\omega(k)}{\omega(n)}\right)^{\zeta}\left(\frac{\omega(k)}{\omega(n)}\right)^{-\zeta}\\
        &\leqslant KM,\qquad \forall\,k\geqslant n.
    \end{align*}
Finally, using the identity  $\Phi_{\mu,\gamma}(k,n)=\Phi_{\mu,\zeta}(k,n)\left(\frac{\mu(k)}{\mu(n)}\right)^{\zeta-\gamma}$ , we conclude that
\[
\|\Phi_{\mu,\gamma}(k,n)\|\leqslant KM \left(\frac{\mu(k)}{\mu(n)}\right)^{\zeta-\gamma},\quad \forall\,k\geqslant n.
\]
Hence, the $(\mu,\gamma)$-weighted system has $\muD$ with parameters $(\Id;\zeta-\gamma,*)$. Thus, we infer that $(b,+\infty]\subset \rho_\muD(A)$, {\it i.e.}, $\Sigma_\muD^+(A)\subset [0,b]$.
\end{proof}

The next figure illustrate the statement of Theorem~\ref{809}--(iii): 

\medskip

\begin{center}
\begin{tikzpicture}[scale=1.0, thick]
  \draw[->] (-4,0) -- (9,0); 
  \draw[<-] (-4,0) -- (5,0); 

  \draw[black] (-3.5,0.2) -- (-3.5,-0.2) node[below] {$a$};
  \draw[black] (3,0.2) -- (3,-0.2) node[below] {$0$};
  \draw[black] (8,0.2) -- (8,-0.2) node[below] {$b$};

  \draw[blue, thick] (-1.5,0.75) -- (-1.5,-0.75) -- (-1.4,-0.75);
  \draw[blue, thick] (-1.5,0.75) -- (-1.4,0.75);
  \node[blue] at (-0.5,1.0) {$\Sigma_{\omegaD}^{-}(A)$};

  \draw[->, blue, thick] (-0.8,-0.3) -- (-0.2,-0.3);

  \draw[blue, thick] (7.0,0.75) -- (7.0,-0.75) -- (6.9,-0.75);
  \draw[blue, thick] (7.0,0.75) -- (6.9,0.75);
  \node[blue] at (6.0,1.0) {$\Sigma_{\omegaD}^{+}(A)$};

  \draw[->, blue, thick] (6.3,-0.3) -- (5.7,-0.3);

  \draw[red, thick] (1.5,0.75) -- (1.5,-0.75) -- (1.6,-0.75);
  \draw[red, thick] (1.5,0.75) -- (1.6,0.75);
  \node[red] at (1.9,-1.1) {$\Sigma_{\muD}^{-}(A)$};

  \draw[red, thick] (4.5,0.75) -- (4.5,-0.75) -- (4.4,-0.75);
  \draw[red, thick] (4.5,0.75) -- (4.4,0.75);
  \node[red] at (4.5,-1.1) {$\Sigma_{\muD}^{+}(A)$};

  \draw[->, red, thick] (1.8,-0.3) -- (2.4,-0.3);
  \draw[->, red, thick] (4.2,-0.3) -- (3.5,-0.3);

\end{tikzpicture}
\end{center}

\begin{remark}
{\rm
By gathering the conclusions of Theorems \ref{805}, \ref{806}, \ref{808} and \ref{809} a clear scenario is revealed: As growth rates become faster (or weakly faster) the spectrum shrinks towards zero. On the other hand, if the growth rates become slower (or weakly slower), the spectrum expands both towards $-\infty$ and $+\infty$.  
}    
\end{remark}

\bigskip

\section{Order relations on growth rates}\label{section5}

Consider either continuous or discrete time systems and denote by $\mathcal{G}$ the family of all growth rates in that time framework. So far, we have introduced two comparison criteria, namely $\prec$ and $\ll$. In this section we look to define order relations on $\mathcal{G}$ based on these criteria and study their spectral consequences.

As mentioned on Remark \ref{807}, the relation $\ll$ is transitive. However, it its clear that it is not an order relation, since its not reflexive, as a growth rate is not faster than itself, and it is also not anti-symmetric. On the other hand, $\prec$ is clearly transitive and it is reflexive, although again not anti-symmetric.

For transitive reflexive but not anti-symmetric relations, it is standard to define an equivalence relation in order to obtain an order, and this will be the procedure we follow with the weak comparison $\prec$. On the other hand, as $\ll$ is not reflexive, we must first go through the following auxiliary concept:

\begin{definition}\label{almostfaster}
    Consider two growth rates $\mu,\omega:\mathbb{Z},\mathbb{R}\to \mathbb{R}^+$. We say $\mu$ is \textbf{almost faster} than $\omega$ ($\mu\succdot\omega$) if for every $\walpha<0$ there exist $M(\walpha)\geqslant1$ and $\alpha(\walpha)<0$ such that
    \begin{equation}\label{900}
        \left(\frac{\mu(k)}{\mu(n)}\right)^\alpha\leqslant M\left(\frac{\omega(k)}{\omega(n)}\right)^{\walpha},\quad \forall\,k\geqslant n.
    \end{equation}

    On the other hand, we say $\omega$ is \textbf{almost slower} than $\mu$ ($\omega\precdot\mu$) if for every $\alpha<0$ there exist $M(\alpha)\geqslant1$ and $\walpha(\alpha)<0$ such that \eqref{900} is verified.
\end{definition}

As established in Lemma \ref{4} and Remark \ref{905}, this relation may equivalently be formulated in terms of positive exponents.

From Definitions \ref{901} and \ref{902}, we (tautologically) have 
\[
\omega\ll\mu\Longleftrightarrow \mu\gg\omega\quad\text{and}\quad\omega\prec\mu\Longleftrightarrow \mu\succ\omega.
\]

However, this equivalence does not hold for the new Def.~\ref{almostfaster}, as despite the equation \eqref{900} remains the same, the order in which the parameters are selected differs in each case.

\begin{lemma}\label{903}
    Consider two growth rates $\mu_1,\mu_2\in\mathcal{G}$. We have:
    \begin{itemize}
         \item [i)] If $\omega\ll\mu_1$ and $\mu_1\precdot \mu_2$, then $\omega\ll\mu_2$.
    \item [ii)] If $\omega\gg\mu_1$ and $\mu_1\succdot\mu_2$, then $\omega\gg\mu_2$.
   
    \end{itemize}
\end{lemma}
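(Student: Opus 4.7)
The plan is to verify each inclusion by a direct chaining of inequalities, exploiting the fact that the ``almost'' relations of Def.~\ref{almostfaster} are tailored precisely so that the quantifier order aligns with the ``strong'' relations $\ll$ and $\gg$.

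For item (i), fix arbitrary $\alpha,\widetilde{\alpha}<0$; the target inequality to establish is
\[
\left(\frac{\mu_2(k)}{\mu_2(n)}\right)^{\alpha}\leqslant M\left(\frac{\omega(k)}{\omega(n)}\right)^{\widetilde{\alpha}},\qquad\forall\,k\geqslant n,
\]
for some $M\geqslant 1$. The relation $\mu_1\precdot\mu_2$ is of the form ``$\forall\alpha\,\exists\widehat{\alpha},\,M_1$'', so it can be triggered directly on the given $\alpha$, producing $\widehat{\alpha}<0$ and $M_1\geqslant 1$ such that $(\mu_2(k)/\mu_2(n))^{\alpha}\leqslant M_1(\mu_1(k)/\mu_1(n))^{\widehat{\alpha}}$. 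Now apply $\omega\ll\mu_1$ (equivalently, $\mu_1\gg\omega$) with the pair of exponents $(\widehat{\alpha},\widetilde{\alpha})$ to obtain $M_2\geqslant 1$ with $(\mu_1(k)/\mu_1(n))^{\widehat{\alpha}}\leqslant M_2(\omega(k)/\omega(n))^{\widetilde{\alpha}}$. Multiplying the two estimates and setting $M:=M_1 M_2$ yields the conclusion.

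For item (ii), fix arbitrary $\alpha,\widetilde{\alpha}<0$; the target is
\[
\left(\frac{\omega(k)}{\omega(n)}\right)^{\alpha}\leqslant M\left(\frac{\mu_2(k)}{\mu_2(n)}\right)^{\widetilde{\alpha}},\qquad\forall\,k\geqslant n.
\]
The relation $\mu_1\succdot\mu_2$ is of the form ``$\forall\widetilde{\alpha}\,\exists\alpha,\,M$'', so it can be triggered on $\widetilde{\alpha}$, yielding $\widehat{\alpha}<0$ and $M_1\geqslant 1$ with $(\mu_1(k)/\mu_1(n))^{\widehat{\alpha}}\leqslant M_1(\mu_2(k)/\mu_2(n))^{\widetilde{\alpha}}$. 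Next, invoke $\omega\gg\mu_1$ with the pair $(\alpha,\widehat{\alpha})$ to get $M_2\geqslant 1$ such that $(\omega(k)/\omega(n))^{\alpha}\leqslant M_2(\mu_1(k)/\mu_1(n))^{\widehat{\alpha}}$, and concatenate.

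There is no substantive obstacle here; the only thing to be careful about is the direction in which each quantifier runs. The ``almost'' definitions were designed asymmetrically: $\succdot$ lets one prescribe the exponent that appears on the right (matching the ``receiving'' exponent in $\gg$), while $\precdot$ lets one prescribe the exponent on the left (matching the ``starting'' exponent in $\ll$). This is exactly what makes the two compositions $\ll\circ\,\precdot$ and $\gg\circ\,\succdot$ collapse into $\ll$ and $\gg$ respectively, and why the symmetric pairings (e.g.\ $\ll\circ\,\succdot$) would fail to chain.
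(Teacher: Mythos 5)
Your proof is correct and follows essentially the same two-step chaining argument as the paper: trigger the ``almost'' relation on the appropriate fixed exponent, then invoke the strong relation with the produced intermediate exponent, and multiply the constants. The final remark about the deliberate asymmetry in Definition~\ref{almostfaster} is a nice clarification that the paper leaves implicit.
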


\begin{proof}
    \begin{itemize}
        \item [i)] Choose $\alpha,\walpha<0$ and assume $\mu_1\precdot\mu_2$. Then, there exist $M=M(\alpha)\geqslant 1$ and $\hat{\alpha}=\hat{\alpha}(\alpha)<0$ such that
        \begin{equation*}
                    \left(\frac{\mu_2(k)}{\mu_2(n)}\right)^\alpha\leqslant M\left(\frac{\mu_1(k)}{\mu_1(n)}\right)^{\hat{\alpha}},\quad \forall\,k\geqslant n.
        \end{equation*}
Moreover, since $\omega\ll\mu_1$, there exists $\hat{M}=\hat{M}(\hat{\alpha},\walpha)\geqslant 1$ such that
        \begin{equation*}
                    \left(\frac{\mu_1(k)}{\mu_1(n)}\right)^{\hat{\alpha}}\leqslant \hat{M}\left(\frac{\omega(k)}{\omega(n)}\right)^{\walpha},\quad \forall\,k\geqslant n.
        \end{equation*}
        Thus, we infer
                \begin{equation*}
                    \left(\frac{\mu_2(k)}{\mu_2(n)}\right)^{{\alpha}}\leqslant M\hat{M}\left(\frac{\omega(k)}{\omega(n)}\right)^{\walpha},\quad \forall\,k\geqslant n,
        \end{equation*}
        {\it i.e.}, $\omega\ll\mu_2$.

            \item [ii)] Choose $\alpha,\walpha<0$ and assume $\mu_1\succdot\mu_2$. Then, there exist $M=M(\walpha)\geqslant 1$ and $\hat{\alpha}=\hat{\alpha}(\walpha)<0$ such that
        \begin{equation*}
                    \left(\frac{\mu_1(k)}{\mu_1(n)}\right)^{\hat{\alpha}}\leqslant M\left(\frac{\mu_2(k)}{\mu_2(n)}\right)^{{\walpha}},\quad \forall\,k\geqslant n.
        \end{equation*}
Moreover, since $\omega\gg\mu_1$, there exists $\hat{M}=\hat{M}(\hat{\alpha},\alpha)\geqslant 1$ such that
        \begin{equation*}
                    \left(\frac{\omega(k)}{\omega(n)}\right)^{{\alpha}}\leqslant \hat{M}\left(\frac{\mu_1(k)}{\mu_1(n)}\right)^{\hat{\alpha}},\quad \forall\,k\geqslant n.
        \end{equation*}
        Therefore, we infer
                \begin{equation*}
                    \left(\frac{\omega(k)}{\omega(n)}\right)^{{\alpha}}\leqslant M\hat{M}\left(\frac{\mu_2(k)}{\mu_2(n)}\right)^{\walpha},\quad \forall\,k\geqslant n,
        \end{equation*}
        {\it i.e.}, $\omega\gg\mu_2$.
    \end{itemize}
\end{proof}

The previous result gives a qualitative description of the relations $\precdot$ and $\succdot$. We have:
\begin{itemize}
    \item [i)] If $\mu_1$ is almost slower than $\mu_2$, that is $\mu_1\precdot\mu_2$, then every growth rate that is slower than $\mu_1$ will also be slower than $\mu_2$.
    \item [ii)] If $\mu_1$ is almost faster than $\mu_2$, that is $\mu_1\succdot\mu_2$, then every growth rate that is faster than $\mu_1$ will also be faster than $\mu_2$.
\end{itemize}

This description will allow us to create a reflexive relation on $\mathcal{G}$ based on $\ll$. Now, we present the two notions of equivalence that will derive in orders based on $\prec$ and $\ll$.

\begin{definition}\label{equivalences}
    Consider $\omega,\mu\in \mathcal{G}$. We say that
    \begin{itemize}
        \item [i)] $\omega$ is \textbf{weakly equivalent} to $\mu$, and denote it $\omega\sim \mu$, if $\omega\prec\mu\prec \omega$
        \item [ii)] $\omega$ is \textbf{equivalent} to $\mu$, and denote it $\omega\approx \mu$, if $\omega\precdot\mu\precdot \omega$ and $\omega\succdot\mu\succdot \omega$.
    \end{itemize}
\end{definition}

 It is immediate that $\sim$ is an equivalence relation. Consider for instance the growth rate $\mu:\mathbb{R}\to \mathbb{R}^+$ defined by
       \begin{equation*}
		\mu(t):= \left\{ \begin{array}{lcc}
		\ceg(t) &  \text{ if } &   |t|\geqslant a^*, \\
  p(t)  &\text{if}& |t|<a^*,
		\end{array}
		\right.
		\end{equation*}
        where $a^*$ is the only positive solution to the equation $\ceg(t)=p(t)$. It is immediate to verify that $\mu\sim \ceg$. Moreover, on $\mathcal{G}/\sim$, the relation $\prec$ becomes an order. 
    
Similarly, it is clear that $\approx$ also defines an equivalence relation on $\mathcal{G}$. For instance, if we define $\mu(t)=e^{3t}$, we find $\mu\approx\exp$, although $\mu\not\ll\exp$ nor $\exp\not\ll\mu$. Note that this definition also solves the problem of the non reflexivity of the relation $\ll$.

Although elements on the quotients $\mathcal{G}/\sim$ or $\mathcal{G}/\approx$ are not growth rates but classes of functions, we will call them growth rates, since at any occasion it is enough to choose a class representative.

Note that the name {\it weakly equivalent} for the relation $\sim$ comes from the relation being defined through the weak comparison, even tho it is a more stringent classification than $\approx$.  Indeed, note that $\mu_1\sim\mu_2\Rightarrow \mu_1\approx\mu_2$, but the converse is not true. For instance, if we consider $\omega\in \mathcal{G}$ and $\mu(k)=\left(\omega(k)\right)^\theta$, for some $\theta\geq 1$ as in Example \ref{810}, it is easy to check that $\omega\approx \mu$ but $\omega\not\sim \mu$,  since $\omega\prec \mu$ but $\mu\not\prec\omega$.

An immediate corollary is derived from Lemma \ref{903}, for which we do not give a proof since it is elementary.

\begin{proposition}\label{909}
Let $\mu_1,\mu_2\in \mathcal{G}$.
    \begin{align*}
    \mu_1\approx\mu_2\Longrightarrow\forall\,\omega\in \mathcal{G}:\,&\left(\left(\omega\ll\mu_1\Leftrightarrow\omega\ll\mu_2\right)\land \left(\mu_1\ll\omega\Leftrightarrow\mu_2\ll\omega\right)\right).
\end{align*}
That is, if two growth rates are equivalent, then they have the same families of growth rates that are faster and slower than them.
\end{proposition}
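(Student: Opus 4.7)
The plan is to recognize that Proposition~\ref{909} is an immediate corollary of Lemma~\ref{903}, obtained by applying it in both directions for each of the two biconditionals. The hypothesis $\mu_1\approx\mu_2$ unfolds, by Def.~\ref{equivalences}(ii), into the four relations
\[
\mu_1\precdot\mu_2,\qquad \mu_2\precdot\mu_1,\qquad \mu_1\succdot\mu_2,\qquad \mu_2\succdot\mu_1,
\]
which are exactly what is needed to feed the two statements of Lemma~\ref{903} in both directions.

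For the first biconditional, I fix an arbitrary $\omega\in\mathcal{G}$. If $\omega\ll\mu_1$, then applying Lemma~\ref{903}(i) with $\mu_1\precdot\mu_2$ yields $\omega\ll\mu_2$; the reverse implication follows symmetrically from $\omega\ll\mu_2$ together with $\mu_2\precdot\mu_1$, via the same item of the lemma. For the second biconditional, I would first rewrite $\mu_i\ll\omega$ as $\omega\gg\mu_i$, using the tautological equivalence $\omega\ll\mu\Leftrightarrow\mu\gg\omega$ noted right after Defs.~\ref{901}–\ref{902}. Then, from $\omega\gg\mu_1$ and $\mu_1\succdot\mu_2$, Lemma~\ref{903}(ii) delivers $\omega\gg\mu_2$, i.e.\ $\mu_2\ll\omega$; the converse is obtained analogously from $\mu_2\succdot\mu_1$.

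There is essentially no obstacle: the argument is pure bookkeeping, unwinding $\approx$ into its four constituent comparisons and invoking Lemma~\ref{903} four times. The only conceptual subtlety worth flagging in the write-up is why Def.~\ref{equivalences}(ii) demands both the $\precdot$ and $\succdot$ pairs rather than just one; this is because, as the text remarks immediately after Def.~\ref{almostfaster}, the tautological equivalence between $\ll$ and $\gg$ does not descend to the level of $\precdot$ and $\succdot$ (the order in which the parameters are quantified differs), so both families of hypotheses genuinely enter—$\precdot$ handles the $\omega\ll\mu_i$ biconditional while $\succdot$ handles the $\mu_i\ll\omega$ biconditional.
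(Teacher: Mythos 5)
Your proof is correct and matches the route the paper intends: the paper states the result as an immediate corollary of Lemma~\ref{903} and omits the details precisely because the argument is the bookkeeping you spell out. Your unwinding of $\mu_1\approx\mu_2$ into the four relations $\mu_1\precdot\mu_2$, $\mu_2\precdot\mu_1$, $\mu_1\succdot\mu_2$, $\mu_2\succdot\mu_1$ and the four invocations of Lemma~\ref{903} (parts (i) and (ii) in each direction, after rewriting $\mu_i\ll\omega$ as $\omega\gg\mu_i$) is exactly right, and your closing remark correctly explains why both the $\precdot$ and the $\succdot$ pairs in Def.~\ref{equivalences}(ii) are genuinely needed.
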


\begin{remark}
    {\rm
An open question, for which we have neither a proof nor a counterexample, is whether the implication in Proposition \ref{909} is actually an equivalence.
    }
\end{remark}

Now, we are in a position to define an order on $\mathcal{G}/\approx$ based on the relation $\precdot$, given by
\begin{align*}
    \mu_1\lll\mu_2\Longleftrightarrow \mu_1\precdot\mu_2\land \mu_2\succdot\mu_1.
\end{align*}
More importantly, we have the following implication in terms of the relation $\ll$, which also follows directly from Lemma \ref{903}.

\begin{corollary} Let $\mu_1,\mu_2\in\mathcal{G}$.
    \begin{align*}
    \mu_1\lll\mu_2\Longrightarrow\forall\,\omega\in \mathcal{G}:\,&\left(\left(\omega\ll\mu_1\Rightarrow\omega\ll\mu_2\right)\land \left(\mu_2\ll\omega\Rightarrow\mu_1\ll\omega\right)\right).
\end{align*}
\end{corollary}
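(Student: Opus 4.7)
The plan is to simply unpack the definition of $\lll$ and apply Lemma~\ref{903} twice. By definition, the hypothesis $\mu_1\lll\mu_2$ means two things simultaneously: $\mu_1\precdot\mu_2$ and $\mu_2\succdot\mu_1$. The conclusion to be proved consists of two implications for an arbitrary $\omega\in\mathcal{G}$, and I expect each of them to be matched with one of the two items in Lemma~\ref{903}.

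For the first implication, I would fix $\omega\in\mathcal{G}$ with $\omega\ll\mu_1$ and combine this with the piece $\mu_1\precdot\mu_2$ of the hypothesis. Lemma~\ref{903}(i) is stated in exactly this form, so it yields $\omega\ll\mu_2$ with no further work.

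For the second implication, assume $\mu_2\ll\omega$, i.e.\ $\omega\gg\mu_2$. Here the useful piece of the hypothesis is $\mu_2\succdot\mu_1$. Lemma~\ref{903}(ii) reads ``if $\omega\gg\nu_1$ and $\nu_1\succdot\nu_2$, then $\omega\gg\nu_2$''; instantiating it with $\nu_1:=\mu_2$ and $\nu_2:=\mu_1$ yields $\omega\gg\mu_1$, which is $\mu_1\ll\omega$ as desired.

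There is essentially no obstacle: once the two defining clauses of $\mu_1\lll\mu_2$ are written down side by side with the two desired implications, the correspondence with Lemma~\ref{903}(i) and (ii) is immediate, and only a harmless relabelling of $\mu_1,\mu_2$ in item (ii) is needed. Accordingly, the proof can be left essentially to the reader, as the authors indicate, and the statement is best viewed as a qualitative reformulation of Lemma~\ref{903} in the language of the order $\lll$ induced on $\mathcal{G}/\!\approx$.
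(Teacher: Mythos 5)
Your proposal is correct and follows exactly the route the paper intends: the paper states that the corollary ``also follows directly from Lemma \ref{903}'' and gives no explicit argument, and your two applications of Lemma \ref{903}(i) and (ii), after unpacking $\mu_1\lll\mu_2$ into $\mu_1\precdot\mu_2$ and $\mu_2\succdot\mu_1$ and relabelling in item (ii), are precisely that omitted argument.
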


Employing this order, we can give a more precise interpretation of the duality between bounded growth and dichotomy we mentioned on Remark \ref{907}. This result is immediately derived from Theorems \ref{805} and \ref{806}.

\begin{theorem}\label{811}
    Consider a totally ordered ($\lll$) chain $\mathcal{C}$ on $\mathcal{G}/\approx$. For every system \eqref{700}, there is at most one growth rate $\mu\in \mathcal{C}$ for which the system admits both $\mu$-bounded growth and $\mu$-dichotomy.
\end{theorem}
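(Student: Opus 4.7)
The plan is a proof by contradiction that exploits the duality between bounded growth and dichotomy captured by Theorems~\ref{805} and~\ref{806}, in the form of Corollaries~\ref{721} and~\ref{722}. Suppose toward a contradiction that two distinct classes $[\mu_1],[\mu_2]\in\mathcal{C}$ both satisfy the hypothesis, so that for each $i\in\{1,2\}$ the system admits both $\mu_i$-bounded growth and $\mu_i$-dichotomy. Since $\mathcal{C}$ is a chain with respect to $\lll$ and $[\mu_1]\neq[\mu_2]$, without loss of generality $[\mu_1]\lll[\mu_2]$ strictly, i.e., $\mu_1\precdot\mu_2$ and $\mu_2\succdot\mu_1$ with $\mu_1\not\approx\mu_2$.

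With $\mu_2$ thus playing the role of the strictly faster rate, Corollary~\ref{722} (a direct consequence of Theorem~\ref{806}) applies: from $\mu_1$-bounded growth together with $\mu_2\gg\mu_1$ one concludes that the system cannot admit $\mu_2$-dichotomy, contradicting the standing assumption on $\mu_2$. The symmetric application of Corollary~\ref{721} (a consequence of Theorem~\ref{805}) yields the same contradiction from the opposite direction: the system has $\mu_2$-dichotomy and $\mu_1\ll\mu_2$, which by Corollary~\ref{721} forbids $\mu_1$-bounded growth. Either route closes the argument, and I would present whichever reads most cleanly after the WLOG step.

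The one step requiring care is the passage from the strict $\lll$-comparison ordering the chain on $\mathcal{G}/\approx$ to the $\ll$-hypothesis literally demanded in Corollaries~\ref{721} and~\ref{722}. This bridge is precisely the content of the corollary stated just before Theorem~\ref{811}, which shows that $\lll$ transfers faster/slower ($\ll$) comparisons through the equivalence classes; combined with $[\mu_1]\neq[\mu_2]$ and the fact that $\precdot$ and $\succdot$ reduce, on representatives, to the linear growth comparison used in Definition~\ref{901}, it upgrades the strict chain order to $\mu_2\gg\mu_1$. Once this upgrade is in hand, the proof is the one-line contradiction above, in line with the authors' remark that the result is ``immediately derived'' from Theorems~\ref{805} and~\ref{806}.
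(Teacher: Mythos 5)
Your argument breaks at exactly the step you call ``the one step requiring care'': the passage from the strict chain order $[\mu_1]\lll[\mu_2]$ on $\mathcal{G}/\approx$ to the comparison $\mu_2\gg\mu_1$ demanded by Corollaries~\ref{721} and~\ref{722}. The unnumbered corollary preceding Theorem~\ref{811} does not supply this bridge: it only propagates $\ll$-comparisons with a \emph{third} growth rate $\omega$ (from $\mu_1\lll\mu_2$ one learns that $\omega\ll\mu_1$ forces $\omega\ll\mu_2$, and $\mu_2\ll\omega$ forces $\mu_1\ll\omega$), but it says nothing about whether $\mu_1$ and $\mu_2$ are themselves $\ll$-comparable. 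Your auxiliary claim that ``$\precdot$ and $\succdot$ reduce, on representatives, to the linear growth comparison of Definition~\ref{901}'' is contradicted by the paper itself: Example~\ref{810} exhibits $\omega$ and $\mu=\omega^{\theta}$, $\theta>1$, with $\omega\prec\mu$ (hence $\omega\precdot\mu$, $\mu\succdot\omega$, and even $\omega\approx\mu$) while $\omega\not\ll\mu$. The almost-faster/almost-slower relations are strictly weaker than $\ll$, and strictness of the chain order, i.e.\ $\mu_1\not\approx\mu_2$, does not repair this: one can build discrete rates with $\mu_1\lll\mu_2$, $\mu_1\not\approx\mu_2$, and still $\mu_1\not\ll\mu_2$ --- for instance $\mu_1=\exp$ and $\mu_2(n)=e^{\,n+h(n)}$ for $n\geqslant 0$ (extended oddly for $n<0$), with $h$ a nondecreasing staircase whose jump heights are unbounded but which is constant on exponentially long plateaus, so that $r\mapsto$ ``$\mu_2(k)/\mu_2(n)\geqslant M^{-1}(\mu_1(k)/\mu_1(n))^{r}$ holds'' fails already at $r=2$.

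With the bridge to $\ll$ unavailable, the one-line contradiction via Corollaries~\ref{721} and~\ref{722} does not go through under the theorem's actual hypotheses, so the proof is incomplete. A route that does match the hypotheses is to work directly from the estimates rather than from the chain structure: for each $i$, $\mu_i$-dichotomy and $\mu_i$-bounded growth together squeeze the relevant norms of $\Phi(k,n)$ between two fixed powers of $\mu_i(k)/\mu_i(n)$ on $k\geqslant n$. Pairing the lower bound from one rate with the upper bound from the other produces inequalities of precisely the form \eqref{900} in both directions (after the elementary rescaling of exponents), and hence $\mu_1\approx\mu_2$, i.e.\ $[\mu_1]=[\mu_2]$ in $\mathcal{G}/\approx$. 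That conclusion is exactly what the theorem asserts, and it is genuinely different from the $\ll$-based contradiction you proposed.
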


  As the classification given by $\sim$ is more stringent than $\approx$, Corollary \ref{811} is not true if we consider $\mathcal{G}/\sim$ instead of $\mathcal{G}/\approx$. In some sense, the elements in $\mathcal{G}/\approx$ are unique, but not in $\mathcal{G}/\sim$.


We conclude this study with the following theorem, which establishes how these criteria of equivalence characterize the spectra obtained for a given system.
 
\begin{theorem}\label{908}
    Consider two growth rates $\mu,\omega$ and system \eqref{700}.
    \begin{itemize}
        \item [i)] If $\mu\sim\omega$, then  $\Sigma_\muD(A)=\Sigma_\omegaD(A)$.

        \medskip
        
        \item [ii)] If $\mu\approx \omega$, then $\Sigma_\muD(A)$ and $\Sigma_\omegaD(A)$ are qualitatively equivalent. That is:
        \begin{itemize}
        \item [a)]  $\pm\infty\in \Sigma_\muD(A)$ if and only if $\pm\infty\in \Sigma_\omegaD(A)$,
            \item [b)] $\rho_\muD(A)$ and $\rho_\omegaD(A)$ have the same amount of spectral gaps,
            \item [c)] there is a ordered correspondence between the spectral gaps,
            \item [d)] correspondent spectral gaps have the same associated invariant projector,
            \item [e)] correspondent spectral gaps are contained in the same real semiaxis.
        \end{itemize}
    \end{itemize}
\end{theorem}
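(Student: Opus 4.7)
The strategy for part (i) is to show $\rho_{\mu D}(A)=\rho_{\omega D}(A)$ together with coincidence of invariant projectors, which immediately gives equality of the spectra. Given $\gamma\in\rho_{\omega D}(A)$ with projector $\P$ and parameters $(\alpha,\beta)$, undoing the weighting yields $\|\Phi(k,n)\P(n)\|\leqslant K(\omega(k)/\omega(n))^{\alpha+\gamma}$ for $k\geqslant n$ and an analogous backward bound for $\Id-\P$. I convert each $\omega$-ratio into a $\mu$-ratio with the same exponent (up to a constant) by a case analysis on the sign of $\alpha+\gamma$: when $\alpha+\gamma<0$ I invoke $\mu\prec\omega$ at that exponent, and when $\alpha+\gamma\geqslant 0$ I use Remark~\ref{905} (a consequence of $\omega\prec\mu$) to dominate $\omega(k)/\omega(n)$ pointwise by a constant multiple of $\mu(k)/\mu(n)$ and raise to the nonnegative power. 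Dividing by $(\mu(k)/\mu(n))^{\gamma}$ yields the forward $\mu$-dichotomy estimate with projector $\P$ and parameter $\alpha$, and the $\Id-\P$ estimate is treated symmetrically for backward pairs. By symmetry of $\sim$ the reverse inclusion also holds, and uniqueness of projectors per gap (Spectral Theorem (i)) gives the matching.

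For part (ii)(a), I reformulate $+\infty\in\rho_{\mu D}(A)$ as the existence of some $c\in\mathbb{R}$ and $K\geqslant 1$ with $\|\Phi(k,n)\|\leqslant K(\mu(k)/\mu(n))^{c}$ for $k\geqslant n$, since $c=\alpha+\hat\gamma$ can take any real value as $\hat\gamma$ ranges. Translating this inequality into its $\omega$-analogue uses $\mu\precdot\omega$ when $c<0$ and the dual (positive-exponent) formulation of $\omega\succdot\mu$ obtained by taking reciprocals when $c>0$; the case $c=0$ is trivial. Both relations are contained in $\mu\approx\omega$, and symmetry of $\approx$ delivers the converse; the analogous argument applied to backward pairs handles $-\infty$.

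For parts (ii)(b)-(d), I show that the sets of dichotomy projectors arising for $\mu$ and for $\omega$ coincide. Given a projector $\P$ realized in a $\mu$-gap $(b_i,a_{i+1})$, the dichotomy estimates hold with forward exponent $c_1$ close to $b_i$ and backward exponent $c_2$ close to $a_{i+1}$, with $c_1<c_2$. Translating each bound to $\omega$ by the appropriate case of $\mu\approx\omega$ yields exponents $c_1',c_2'$ of the same signs as $c_1,c_2$. The main technical step is securing $c_1'<c_2'$: this is automatic when $c_1<0<c_2$, and in the same-sign cases it follows from the freedom to enlarge $c_1'$ (forward bound with base $\geqslant 1$) and shrink $c_2'$ (backward bound with base $\leqslant 1$) within each translated inequality. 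Hence $\P$ is also an $\omega$-dichotomy projector, and by symmetry the two projector sets coincide; combined with the total order of projectors by range inclusion (Spectral Theorem (iii)), this produces an ordered bijection of spectral gaps with matching projectors, giving (b), (c), (d). For (e), preservation of $0\in\rho$ and of the projector at $\gamma=0$ follows from the same translation, since at $\gamma=0$ the exponents satisfy $c_1<0<c_2$; item (d) together with the total ordering of projectors then forces corresponding gaps to lie on the same side of $0$. The principal difficulty of the proof is the verification of $c_1'<c_2'$ in (b)-(d); the remaining items reduce to direct applications of the comparison criteria.
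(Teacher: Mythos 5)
Your part (i) is correct and takes essentially the same route as the paper: translate the $\mu$-dichotomy estimate for the $\gamma$-weighted system into an $\omega$-estimate by converting $\mu$-ratios to $\omega$-ratios (the paper does this in one line using both inclusions in $\sim$, while you make the sign-splitting on $\alpha+\gamma$ explicit via Remark~\ref{905}; both are equivalent). Your reformulation in (ii)(a) of $+\infty\in\rho_{\muD}(A)$ as a single one-sided bound $\|\Phi(k,n)\|\leqslant K(\mu(k)/\mu(n))^{c}$ is also sound, though note that you invoke $\mu\precdot\omega$ where $\omega\precdot\mu$ is the relation actually needed; since $\approx$ contains all four almost-comparisons this is harmless.

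The real problem is in (ii)(b)--(d), which you yourself flag as ``the main technical step.'' After translating the forward and backward exponents $c_1<c_2$ to $\omega$-exponents $c_1',c_2'$, you claim that $c_1'<c_2'$ can be secured ``from the freedom to enlarge $c_1'$ \dots and shrink $c_2'$.'' But that freedom runs in the wrong direction: for $k\geqslant n$ the base $\omega(k)/\omega(n)\geqslant 1$, so increasing $c_1'$ only weakens the forward bound (still valid), and for $k\leqslant n$ the base is $\leqslant 1$, so decreasing $c_2'$ only weakens the backward bound. In other words, the set of admissible $c_1'$ is a half-line $[c_1'_{\min},+\infty)$ and the set of admissible $c_2'$ is $(-\infty,c_2'_{\max}]$; the freedom you describe lets you move $c_1'$ \emph{up} and $c_2'$ \emph{down}, which moves them apart in the wrong order and cannot create an overlap $c_1'<c_2'$ when none existed. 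What is actually needed is $c_1'_{\min}<c_2'_{\max}$, and nothing in the enlarge/shrink observation produces that. The paper sidesteps this by reducing (ii) to the claim that for each $\gamma\in\rho_{\muD}(A)$ one can choose a single weight $\zeta\in\rho_{\omegaD}(A)$ with $\sgn\zeta=\sgn\gamma$ and the same projector, and then simply asserts the existence of such $\zeta$ together with the translated parameters $\widetilde\alpha<0<\widetilde\beta$; this is the same delicate point, but the paper's formulation at least makes the target transparent and does not offer an incorrect justification for it. As written, your argument for (b)--(d) (and hence for (e), which relies on it) does not go through.
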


\begin{proof}
    \begin{itemize}
        \item [i)] Choose $\gamma\in \rho_\muD(A)$. The $(\mu,\gamma)$-weighted system has $\muD$ with some parameters $(\P;\alpha,\beta)$ and constant $K$. We can find $M$ such that
   \begin{align*}
        \|\Phi_{\omega,\gamma}(k,n)\P(n)\|&=\|\Phi_{\mu,\gamma}(k,n)\P(n)\|\cdot  \left(\frac{\omega(k)}{\omega(n)}\right)^{-\gamma}\left(\frac{\mu(k)}{\mu(n)}\right)^{\gamma}\\
        &\leqslant M K\cdot \left(\frac{\mu(k)}{\mu(n)}\right)^{\alpha} \\
        &\leqslant M^2 K\cdot \left(\frac{\omega(k)}{\omega(n)}\right)^{\alpha},\quad \forall\,k\geqslant n.
    \end{align*}
    and similarly
       \begin{align*}
        \|\Phi_{\omega,\gamma}(k,n)[\Id-\P(n)]\| &\leqslant M^2 K\cdot \left(\frac{\omega(k)}{\omega(n)}\right)^{\beta},\quad \forall\,k\leqslant n,
    \end{align*}
thus $\gamma\in \rho_\omegaD(A)$. By the symmetry of the definition we obtain the conclusion.

\medskip
        
        \item [ii)] It is enough to prove that for every $\gamma\in \rho_\muD(A)$, there is some $\zeta\in \rho_\omegaD(A)$ with $\sgn(\gamma)=\sgn(\zeta)$ and such that the $(\omega,\zeta)$-weighted system has $\omegaD$ with the same projector that the $(\mu,\gamma)$-weighted system has $\muD$.

        Choose $\gamma\in \rho_\muD(A)$ and suppose without loss of generality that $\gamma>0$. So, the $(\mu,\gamma)$-weighted system has $\muD$ with some parameters $(\P;\alpha,\beta)$. We can find $\zeta>0$, $\walpha<0$, $\wbeta>0$ and $M\geqslant 1$ such that
  \begin{align*}
        \|\Phi_{\omega,\zeta}(k,n)\P(n)\|&=\|\Phi_{\mu,\gamma}(k,n)\P(n)\|\cdot  \left(\frac{\omega(k)}{\omega(n)}\right)^{-\zeta}\left(\frac{\mu(k)}{\mu(n)}\right)^{\gamma}\\
        &\leqslant KM\cdot \left(\frac{\mu(k)}{\mu(n)}\right)^{\alpha} \\
        &\leqslant K M^2\cdot\left(\frac{\omega(k)}{\omega(n)}\right)^{\walpha} ,\quad \forall\,k\geqslant n.
    \end{align*}
and similarly
  \begin{align*}
        \|\Phi_{\omega,\zeta}(k,n)[\Id-\P(n)]\|&\leqslant KM^2\cdot \left(\frac{\omega(k)}{\omega(n)}\right)^{\wbeta} ,\quad \forall\,k\leqslant n,
    \end{align*}
 which completes the proof.
    \end{itemize}
\end{proof}

\end{document}